\setlist[itemize]{leftmargin=1cm}
\setlist[enumerate]{leftmargin=1cm}
\theoremstyle{plain}
\newtheorem{theo}{Theorem}[section]
\newtheorem{prop}[theo]{Proposition}
\newtheorem{coro}[theo]{Corollary}
\newtheorem{lem}[theo]{Lemma}
\theoremstyle{definition}
\newtheorem{defi}[theo]{Definition}
\newtheorem{rem}[theo]{Remark}
\newcommand{\N}{\mathbb{N}}
\newcommand{\R}{\mathbb{R}}
\newcommand{\C}{\mathbb{C}}
\newcommand{\D}{\mathbb{D}}
\newcommand{\Z}{\mathbb{Z}}
\newcommand{\T}{\mathbb{T}}
\newcommand{\Q}{\mathbb{Q}}
\newcommand{\dd}{\mathrm{d}}
\newcommand{\Hol}{\mathop{\rm Hol}\nolimits}
\newcommand{\indic}{\mathds{1}}
\newcommand{\Lcont}{\mathcal{L}}
\newcommand{\Cont}{\mathcal{C}}
\newcommand{\CC}{\mathbb{C}}
\newcommand{\DD}{\mathbb{D}}
\newcommand{\RR}{\mathbb{R}}
\newcommand{\norm}[1]{\left\Vert#1\right\Vert}
\newcommand{\abs}[1]{\left\lvert #1 \right\rvert}
\newcommand{\sbt}{\,\begin{picture}(-1,1)(-1,-3)\circle*{3}\end{picture}\ }
\newcommand{\dsps}{\displaystyle}
\renewcommand{\textbf}[1]{\begingroup\bfseries\mathversion{bold}#1\endgroup}
\author{I. Chalendar}
\address{Isabelle CHALENDAR, Université Gustave Eiffel, LAMA, (UMR 8050), 
    UPEM, UPEC, CNRS, F-77454, Marne-la-Vallée (France)}
\email{isabelle.chalendar@univ-eiffel.fr}
\author{L. Oger}
\address{Lucas OGER, Université Gustave Eiffel, LAMA, (UMR 8050), 
    UPEM, UPEC, CNRS, F-77454, Marne-la-Vallée (France)}
\email{lucas.oger@univ-eiffel.fr}
\author{J. R. Partington}
\address{Jonathan R. PARTINGTON, School of Mathematics, University of Leeds, Leeds LS2 9JT, Yorkshire, U.K.}
\email{j.r.partington@leeds.ac.uk}
\title[Linear isometries of \texorpdfstring{$\Hol(\D)$}{Hol(D)}]
{Linear isometries of \texorpdfstring{$\Hol(\D)$}{Hol(D)}}
\keywords{Fréchet space; holomorphic functions; isometry; Blaschke product; weighted composition operator; spectrum}
\subjclass[2020]{47B33, 30H05, 47A10}
\begin{document}

\begin{abstract}
A complete characterisation is given of all the linear isometries of the Fréchet space of all holomorphic functions on the unit disc, when it is given one of the two standard metrics: these turn out to be weighted composition operators of a particular form. 
Operators similar to an isometry are also classified. Further, the larger class of operators isometric when restricted to one of the defining seminorms is identified. Finally, the spectra of such operators are studied.
\end{abstract}

\maketitle

\section{Introduction}

Let $X$ be a topological space of holomorphic functions on the open unit disc $\D$. For $m$ a holomorphic function on $\D$ and $\varphi$ a holomorphic self-map of $\D$, then the linear operator defined by 
\[ W_{m,\varphi}(f) = m (f \circ \varphi), \quad f \in X,\]
is called the \emph{weighted composition operator} with symbols $m$ and $\varphi$. \medskip

Observe that $W_{m,\varphi}(f) = M_m C_{\varphi}(f)$, where $M_m(f) = m f$ is the \emph{multiplication operator} with symbol $m$ and $C_\varphi(f) = f \circ \varphi$ is the \emph{composition operator} with symbol $\varphi$. If $m$ is identically $1$, then $W_{m,\varphi} = C_\varphi$, and if $\varphi$ is the identity, then $W_{m,\varphi}= M_m$. Weighted composition operators are fundamental in the study of Banach and Hilbert spaces that embed continuously in $\Hol(\D)$, the algebra of all holomorphic functions on $\D$. 
Indeed, the study of the geometry of a space $X$ is centered on the identification of the linear isometries on $X$, and there is an obvious connection between weighted composition operators and isometries. \medskip

This connection can be traced back to Banach himself. In \cite{Banach}, Banach proved that the surjective isometries on ${\Cont}(K)$, the space of continuous real-valued functions on a compact metric space $K$, are of the form $T : f \mapsto m (f \circ \varphi)$, with $\abs{m} \equiv 1$ and $\varphi$ a homeomorphism of $K$ onto itself. \medskip

Although the characterisation of isometries is an open problem for most Banach spaces of holomorphic functions, there are many spaces for which the isometries are known.

\begin{itemize}[label=\sbt]
    \item On the Hardy space $H^p$ of the open unit disc $\D$, $p \neq 2$, Forelli has shown that the isometries are certain weighted composition operators (\cite{Forelli}). \medskip

    \item On the Hilbertian Hardy space $H^2$ on $\D$, there are too many isometries to have a complete characterisation. However, in \cite[Prop. 4.2]{Chal-Part}, Chalendar and Partington showed that provided that the symbol $\varphi$ is inner, there exist weighted composition
    operators
    $W_{m,\varphi}$  that are isometries. 
    Recall that on $H^2$, $C_\varphi$ is isometric if and only if $\varphi$ is inner and $\varphi(0) = 0$ (see \cite{Schwartz}), whereas $W_{m,\varphi}$ is isometric implies that $\varphi$ is inner but no extra condition on $\varphi$ is required. \medskip

    \item On the Bergman space $A^p$ of $\D$, $p\neq 2$, Kolaski (\cite{Kolaski}) showed that the surjective isometries are weighted composition operators. Zorboska (\cite{Zorboska}) initiated the study of isometric weighted composition operators on $A^2$ of $\D$. Those classes of operators were also considered in \cite{Chal-Part} on weighted Bergman spaces. \medskip

    \item On the disc algebra $A(\D)$, El-Gebeily and Wolfe (\cite{El-G-W}) showed that the isometries are of two types: either they are weighted composition operators (\guillemotleft{} Type 1 \guillemotright{} isometries), or we have to add an extension operator to these \guillemotleft{} Type 1 \guillemotright{} isometries. \medskip

    \item On the Bloch space, the first study of the isometries was made by Cima and Wogen in \cite{Cima-Wogen}. They analyzed the isometries on the subspace of the Bloch space of the open unit disc whose elements fix the origin. On this space, they showed that the surjective isometries are normalized compressions of weighted composition operators induced by disc automorphisms, whereas Colonna (\cite{Colonna}) gave a characterisation of the isometric composition operators on the Bloch space. \medskip
\end{itemize}

We will not give an exhaustive list of contributions in this area but we would like to emphasize the fact that the weighted composition operators play a central role in the study of the isometries on several spaces of holomorphic functions on $\D$. \medskip

The goal of this paper is to characterise the linear isometries on the set $\Hol(\D)$ of all holomorphic functions on the open unit disc $\D$. This is a Fréchet space, endowed with the family of seminorms defined by
\[ \norm{f}_{\infty, 1-1/p} := \sup_{\abs{z} \le 1-1/p} \abs{f(z)},
    \qquad f \in \Hol(\D), \qquad p \in \N. \]
Other sequences are possible, and the results we prove will apply to these too.
    
These seminorms are associated with the topology of uniform convergence on all compact subsets of $\D$. An isometry of $\Hol(\D)$ is simply an isometry for all the seminorms $\norm{\cdot}_{\infty, 1-1/p}$, thanks to the following lemma.

\begin{lem}\label{Lemme - Isométrie Fréchet}
    Let $X$ be a Fréchet space, and $(\norm{\cdot}_k)_{k \ge 1}$ an associated increasing family of semi-norms.
    A linear operator $T : X \to X$ is isometric, considering the distance $d_X$ defined by
    \[ d_X(x, y) = \sum_{k = 1}^{\infty} 2^{-k} \min(1, \norm{x-y}_{k}), \]
    if and only if for all $x \in X$ and $k \in \N$, $\norm{Tx}_k = \norm{x}_k$.
\end{lem}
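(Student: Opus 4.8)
For the easier ``if'' direction I would simply invoke linearity: assuming $\norm{Tx}_k=\norm{x}_k$ for all $x$ and $k$, we get $\norm{Tx-Ty}_k=\norm{T(x-y)}_k=\norm{x-y}_k$, so every summand $2^{-k}\min(1,\norm{Tx-Ty}_k)$ coincides with $2^{-k}\min(1,\norm{x-y}_k)$ and hence $d_X(Tx,Ty)=d_X(x,y)$. The real content is the converse, and the plan is first to linearise it. Since $d_X(x,y)$ depends only on $x-y$, the metric is translation invariant, so writing $F(u):=d_X(u,0)=\sum_{k\ge1}2^{-k}\min(1,\norm{u}_k)$ the isometry hypothesis becomes $F(Tu)=F(u)$ for every $u\in X$ (apply it to $x,y$ with $u=x-y$ and $Tx-Ty=T(x-y)$). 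The key step is then to exploit positive homogeneity: replacing $u$ by $tu$ with $t\ge 0$ and using $T(tu)=t\,Tu$ and $\norm{tv}_k=t\norm{v}_k$ gives, with $a_k:=\norm{u}_k$ and $b_k:=\norm{Tu}_k$,
\[ \sum_{k\ge1}2^{-k}\min(1,ta_k)=\sum_{k\ge1}2^{-k}\min(1,tb_k),\qquad t\ge0. \]

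The main obstacle is now to recover, for each fixed $u$, the \emph{entire} sequence $(a_k)$ from this one-parameter family of identities. No single value of $t$ suffices, because the $a_k$ are in general unbounded (indeed $\norm{\cdot}_k$ increases to $\sup_{\D}$), so at any given $t$ infinitely many terms are already saturated at the value $1$ and no finite or pointwise extraction can isolate an individual $a_k$. To get around this I would recast the identity into a form with constant slopes: subtracting both sides from $1=\sum_k 2^{-k}$ turns $\min(1,ta_k)$ into $(1-ta_k)_+$, and substituting $t=1/r$ and multiplying through by $r>0$ yields
\[ \sum_{k\ge1}2^{-k}(r-a_k)_+=\sum_{k\ge1}2^{-k}(r-b_k)_+,\qquad r>0, \]
an equality of two convex, nondecreasing functions of $r$.

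To finish, I would differentiate. Equal functions have equal one-sided derivatives, and differentiating the left-hand side at a point $r$ that is not one of the $a_k$ gives $\sum_{k:\,a_k\le r}2^{-k}$. Here the hypothesis that $(\norm{\cdot}_k)_k$ is \emph{increasing} is essential: it forces $\{k:a_k\le r\}$ to be an initial segment $\{1,\dots,N_a(r)\}$, so the derivative equals $1-2^{-N_a(r)}$ and therefore encodes precisely the counting function $N_a(r)=\#\{k:a_k\le r\}$. Matching the two sides forces $N_a(r)=N_b(r)$ for all $r$ (first off the countable set of atoms, then everywhere by right-continuity and monotonicity), and a nondecreasing sequence is determined by its counting function through the equivalence $a_k\le r\iff N_a(r)\ge k$. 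Hence $a_k=b_k$ for every $k$, that is $\norm{Tu}_k=\norm{u}_k$ for all $u$ and all $k$, as desired. I expect the only delicate points to be the justification of differentiating termwise and the passage from ``almost every $r$'' to ``every $r$'', both of which are routine given the convexity and monotonicity already in place.
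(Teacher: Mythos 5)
Your proof is correct, and it takes a genuinely different route from the paper's. The paper proves the converse by contraposition with a \emph{single} well-chosen scaling: it picks the minimal index $p$ with $\norm{Tx}_p \neq \norm{x}_p$ and normalises so that the larger of the two $p$-th seminorms equals $1$; then the terms with $k<p$ agree by minimality, the $p$-th term differs strictly while staying under the cap in $\min(1,\cdot)$, and the terms with $k>p$ are saturated at $1$ on the normalised side and are $\le 1$ on the other, so the two distances to $0$ differ. You instead scale through \emph{all} $t\ge 0$ and recover the entire sequence $(\norm{u}_k)_k$ from the resulting one-parameter identity, rewriting $\min(1,ta_k)$ as $1-(1-ta_k)_+$ and reading off the counting function $N_a(r)=\#\{k : a_k\le r\}$ from one-sided derivatives of the convex function $r\mapsto \sum_k 2^{-k}(r-a_k)_+$. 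Your two flagged ``delicate points'' are indeed routine, and one can even be removed: since the difference quotients of each $(r-a_k)_+$ lie in $[0,1]$, dominated convergence gives the right derivative $\sum_{k:\,a_k\le r}2^{-k}$ at \emph{every} $r>0$, so you get $N_a=N_b$ everywhere directly, without the detour through non-atom points and right-continuity. You are also right that monotonicity of the seminorms is essential at exactly the spot you use it: dyadic sums over arbitrary index sets are not injective (e.g.\ $\{1\}$ and $\{2,3,\dots\}$ give the same sum $1/2$), but over initial segments the value $1-2^{-N}$ determines $N$. Notably, your sequence-recovery strategy is close in spirit to the paper's proof of its \emph{second} lemma, for the metric $d'_X(x,y)=\sum_k 2^{-k}\norm{x-y}_k/(1+\norm{x-y}_k)$, where the authors likewise scale by $\lambda>0$ and then identify the sequences via Hardy-space reproducing kernels and almost-periodic functions; your convexity argument achieves the analogous identification for $d_X$ by elementary means. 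What each approach buys: the paper's contrapositive is shorter and needs only one scaling, while yours is direct rather than by contradiction, recovers all the seminorms of a fixed $u$ at once, and its truncation-and-transform mechanism is more robust under changes of the metric's shape.
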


\begin{proof}
    Assume that for all $x \in X$ and $k \in \N$, $\norm{Tx}_k = \norm{x}_k$. Then, for all $x, y \in X$,
    \begin{align*} 
        d_X(Tx, Ty) 
        & = \sum_{k = 1}^\infty \frac{1}{2^k} \min(1, \norm{Tx-Ty}_k) \\
        & = \sum_{k = 1}^\infty \frac{1}{2^k} \min(1, \norm{T(x-y)}_k)
        = \sum_{k = 1}^\infty \frac{1}{2^k} \min(1, \norm{x-y}_k)
        = d_X(x, y). 
    \end{align*}
    Hence, the operator $T$ is isometric on $X$ for the distance $d_X$. \medskip

    Conversely, if there exist $x \in X$ and $p \in \N$ such that $\norm{Tx}_p \neq \norm{x}_p$, consider the smallest such $p$  (that is $\norm{Tx}_q = \norm{x}_q$ for all $q < p$). Assume that $\norm{Tx}_p < \norm{x}_p$, and take $y = x/\norm{x}_p$.
    Recall that for $q < p < r$,
    \[ \norm{y}_q \le \norm{y}_p = 1 \le \norm{y}_r. \]
    Then, since $\norm{y}_p > \norm{Ty}_p$,
    \begin{align*} 
        d_X(y, 0) 
        = \sum_{k = 1}^\infty \frac{1}{2^k} \min(1, \norm{y}_k)
        & = \sum_{k = 1}^{p-1} \frac{1}{2^k} \norm{y}_k 
            + \frac{1}{2^p} \norm{y}_p
            + \sum_{k = p+1}^\infty \frac{1}{2^k} \\
        & > \sum_{k = 1}^{p-1} \frac{1}{2^k} \norm{Ty}_k 
            + \frac{1}{2^p} \norm{Ty}_p
            + \sum_{k = p+1}^\infty \frac{1}{2^k}\min(1, \norm{Ty}_k) \\
        & = d_X(Ty, 0).
    \end{align*}
    Finally, the operator $T$ is not isometric on $X$, for the distance $d_X$. \smallskip

    If we have the reverse inequality $\norm{Tx}_p > \norm{x}_p$, then by taking $z = x/\norm{Tx}_p$, since $\norm{z}_p < \norm{Tz}_p = 1$, the same calculations give
    \begin{align*} 
        d_X(Tz, 0) 
        = \sum_{k = 1}^\infty \frac{1}{2^k} \min(1, \norm{Tz}_k)
        & = \sum_{k = 1}^{p-1} \frac{1}{2^k} \norm{Tz}_k 
            + \frac{1}{2^p} \norm{Tz}_p
            + \sum_{k = p+1}^\infty \frac{1}{2^k} \\
        & > \sum_{k = 1}^{p-1} \frac{1}{2^k} \norm{z}_k 
            + \frac{1}{2^p} \norm{z}_p
            + \sum_{k = p+1}^\infty \frac{1}{2^k}\min(1, \norm{z}_k) \\
        & = d_X(z, 0),
    \end{align*}
    so $T$ is also not isometric on $X$ for the classical distance.
\end{proof}

A similar result holds for another commonly-used metric on $\Hol(\DD)$.
\begin{lem}
    Let $X$ be a Fréchet space, and $(\norm{\cdot}_k)_{k \ge 1}$ the associated increasing family of semi-norms.
    A linear operator $T : X \to X$ is isometric, considering the distance $d'_X$ defined by
    \[ d'_X(x, y) = \sum_{k = 1}^{\infty} 2^{-k} 
        \frac{\norm{x-y}_k}{1+\norm{x-y}_k}, \]
    if and only if for all $x \in X$ and $k \in \N$, $\norm{Tx}_k = \norm{x}_k$.
\end{lem}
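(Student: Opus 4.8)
The plan is to follow the overall strategy of Lemma~\ref{Lemme - Isométrie Fréchet}, but to replace its elementary term-by-term comparison—which relied on the fact that $\min(1,\cdot)$ is eventually constant—by an argument that genuinely uses the whole family of seminorms at once. The forward implication is immediate and identical to the previous proof: if $\norm{Tx}_k = \norm{x}_k$ for every $x$ and $k$, then by linearity $\norm{T(x-y)}_k = \norm{x-y}_k$, so every summand defining $d'_X(Tx,Ty)$ equals the corresponding summand of $d'_X(x,y)$. The content is the converse, for which I would show directly that an isometry for $d'_X$ must preserve each seminorm.

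First I would exploit homogeneity. Assume $T$ is a $d'_X$-isometry and fix $x \in X$; applying the isometry identity to $\lambda x$ for $\lambda > 0$ (legitimate since $T(\lambda x) = \lambda Tx$ and $\norm{\lambda x}_k = \lambda \norm{x}_k$) with second argument $0$ yields the one-parameter family
\[ \sum_{k=1}^\infty 2^{-k} \frac{\lambda \norm{x}_k}{1 + \lambda \norm{x}_k} = \sum_{k=1}^\infty 2^{-k} \frac{\lambda \norm{Tx}_k}{1 + \lambda \norm{Tx}_k}, \qquad \lambda > 0. \]
Writing $a_k = \norm{x}_k$, $b_k = \norm{Tx}_k$, using $\tfrac{t}{1+t} = 1 - \tfrac{1}{1+t}$ together with $\sum_k 2^{-k} = 1$, and noting that every series is dominated by $\sum_k 2^{-k}$, this collapses to $\sum_k 2^{-k}(1+\lambda a_k)^{-1} = \sum_k 2^{-k}(1+\lambda b_k)^{-1}$. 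Substituting $s = 1/\lambda$ and clearing the common factor $s$ turns this into the equality of Stieltjes transforms
\[ \int_0^\infty \frac{\dd\mu(t)}{s+t} = \int_0^\infty \frac{\dd\nu(t)}{s+t}, \qquad s > 0, \]
of the two finite atomic measures $\mu = \sum_k 2^{-k}\delta_{a_k}$ and $\nu = \sum_k 2^{-k}\delta_{b_k}$ on $[0,\infty)$, each of total mass $1$.

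Since the Stieltjes transform is injective on finite positive measures on $[0,\infty)$ (for instance because it is an iterated Laplace transform, and the Laplace transform is injective), I conclude $\mu = \nu$. It remains to pass from equality of measures to equality of the sequences, and here I would use that $(\norm{\cdot}_k)_k$ is increasing: both $(a_k)$ and $(b_k)$ are non-decreasing, so for each $t$ the set $\{k : a_k \le t\}$ is an initial segment of $\N$ and hence $\mu([0,t]) = 1 - 2^{-N(t)}$ with $N(t) = \#\{k : a_k \le t\}$, and likewise for $\nu$. Equality of cumulative masses forces $\#\{k : a_k \le t\} = \#\{k : b_k \le t\}$ for all $t$, and two non-decreasing sequences with the same counting function coincide; thus $\norm{x}_k = \norm{Tx}_k$ for every $k$, as required.

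The main obstacle—and the reason the elementary argument of Lemma~\ref{Lemme - Isométrie Fréchet} does not transfer verbatim—is that $t \mapsto t/(1+t)$ is strictly increasing on all of $[0,\infty)$ rather than flat beyond $1$. Consequently the tail terms with $k$ larger than the first index of disagreement cannot be dominated term by term after a single normalisation, so one truly needs the full one-parameter family together with a uniqueness theorem to exclude cancellation. I would also keep an eye on convergence, but since every series here is dominated by $\sum_k 2^{-k} = 1$ and the kernels $1/(s+t)$ are bounded for $s > 0$, no difficulty arises even when the seminorms $\norm{x}_k$ are unbounded in $k$.
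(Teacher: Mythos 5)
Your proof is correct, and it reaches the conclusion by a genuinely different route from the paper's. Both arguments open identically: homogeneity applied to $\lambda x$ yields the one-parameter identity $\sum_k 2^{-k}\,\lambda a_k/(1+\lambda a_k)=\sum_k 2^{-k}\,\lambda b_k/(1+\lambda b_k)$ for all $\lambda>0$, with $a_k=\norm{x}_k$, $b_k=\norm{Tx}_k$. The paper then passes to reciprocals $\alpha_k=1/a_k$ (which is why it needs the bookkeeping with the indices $k_1,k_2$ to discard vanishing seminorms), views the resulting kernel sums as functions in $H^1(\CC_+)\cap H^2(\CC_+)$, pairs them against $e^{-ws}$ via reproducing kernels to obtain equal Dirichlet-type sums $\sum_k 2^{-k}e^{-w(\alpha_k+1)}$, and concludes by uniqueness for almost-periodic functions on the boundary line. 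You instead recognize the identity, after the algebraic reduction $\tfrac{t}{1+t}=1-\tfrac{1}{1+t}$ and the substitution $s=1/\lambda$, as equality of Stieltjes transforms of the finite positive atomic measures $\mu=\sum_k 2^{-k}\delta_{a_k}$ and $\nu=\sum_k 2^{-k}\delta_{b_k}$, and invoke injectivity of the Stieltjes transform (as an iterated Laplace transform). Your route buys three things: positivity carries the analytic load, so no Hardy-space boundary theory or almost periodicity is needed; atoms at $0$ are harmless, so the case of vanishing seminorms requires no separate treatment; and your final step---reading off $\mu([0,t])=1-2^{-N(t)}$ with $N(t)=\#\{k : a_k\le t\}$ and using monotonicity of the seminorm family---makes explicit a combinatorial point that the paper's closing ``so that $\alpha_k=\beta_k$ for all $k$'' leaves implicit: when several seminorm values coincide, equal exponents merge their dyadic coefficients, and one must still unscramble the blocks, which your counting-function argument does cleanly (two non-decreasing sequences with the same counting function agree). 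What the paper's approach buys in exchange is that it stays within the reproducing-kernel machinery it cites and avoids appealing to Stieltjes or Laplace uniqueness, though both inputs are equally standard.
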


\begin{proof}
    The \guillemotleft{} if \guillemotright{} implication is clear, and we now prove the \guillemotleft{} only if \guillemotright{} implication. \smallskip
    
    Let $a_k=\|x\|_k$ and $b_k=\|Tx\|_k$. 
    These are increasing sequences and we can clearly assume that 
    not all the $a_k$ are $0$ and not all the $b_k$ are 0,
    as otherwise $x=0$. So let us suppose
    that $a_k > 0$ for $k \ge k_1$ and similarly $b_k > 0$ for $k \ge k_2$ (so that $a_k=0$
    if $k <k_1$ and $b_k=0$ if $k<k_2$).
    
    We have that for each $\lambda > 0$
    \[ \sum_{k=k_1}^\infty \frac{1}{2^k} 
            \frac{\lambda a_k}{1+\lambda a_k}
        = \sum_{k=k_2}^\infty \frac{1}{2^k} 
            \frac{\lambda b_k}{1+\lambda b_k}. \]
    Divide by $\lambda$, set $\lambda=1+s$ and write $\alpha_k=1/a_k$ and $\beta_k=1/b_k$. We have
    \[ \sum_{k=k_1}^\infty \frac{1}{2^k} \frac{1}{ \alpha_k + 1+s} 
        = \sum_{k=k_2}^\infty \frac{1}{2^k} \frac{1}{ \beta_k + 1+s}, \]
    initially for $s>0$ but, because these series define $H^1(\CC_+)\cap H^2(\CC_+)$ functions on the right half-plane $\CC_+$, they define the same function. These are sums of reproducing kernels (see, for example, \cite[Sec. 1.2]{JRP}), and we may take an inner product with the function $e^{-w s} \in H^\infty(\CC_+)$ for $w >0$ to get
    \[ \sum_{k=k_1}^\infty \frac{1}{2^k} \exp(-w(\alpha_k + 1)) 
        = \sum_{k=k_2}^\infty \frac{1}{2^k} \exp(-w(\beta_k + 1)). \]
    Again, these are equal as $H^\infty(\CC_+)$ functions of $w$, so
    on $w=iy$ for $y \in \RR$ they are equal almost everywhere, and they are both almost-periodic functions of $y$, so that $\alpha_k=\beta_k$ for all $k$ (this standard fact can be found in many places: for example, \cite[Sec. 5.1]{JRP}). This completes the proof.
\end{proof}

The paper is organised as follows. First, in Section \ref{Section - Isométries Hol}, we characterise the linear isometries of $\Hol(\D)$, using only two different seminorms. We prove (Theorem \ref{Thm - 2 semi-normes}) that they are trivial weighted composition operators in the following sense: they are defined by
\[ T_{\alpha, \beta}(f)(z) = \alpha f(\beta z), \qquad
    f \in \Hol(\D), \quad z \in \D, \quad \abs{\alpha} = \abs{\beta} = 1. \]

We will mainly need the two following results. The first one comes from \cite[Proposition 2.1]{ACC} and gives a necessary and sufficient condition to be a composition operator on $\Hol(\D)$. The second one is a new characterisation of finite Blaschke products.

\begin{lem}\label{Lemme - ACC}
    Let $T : \Hol(\D) \to \Hol(\D)$ be linear and continuous. Then $T$ is a composition 
    operator if and only if for all $n \in \N_0$, $Te_n = (Te_1)^n$, with $e_n(z) = z^n$.
\end{lem}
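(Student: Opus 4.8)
The plan is to prove the \guillemotleft{} only if \guillemotright{} direction in one line and to concentrate all the work on the converse. If $T = C_\varphi$ for a holomorphic self-map $\varphi$ of $\D$, then $Te_1 = e_1 \circ \varphi = \varphi$ and $Te_n = e_n \circ \varphi = \varphi^n = (Te_1)^n$ for every $n \in \N_0$, the powers being taken pointwise; this is exactly the stated identity.

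For the converse I would set $\varphi := Te_1 \in \Hol(\D)$ and first observe, using only linearity, that $T$ agrees with the formal composition by $\varphi$ on every polynomial: if $q = \sum_{k=0}^N a_k e_k$ then
\[ Tq = \sum_{k=0}^N a_k Te_k = \sum_{k=0}^N a_k \varphi^k = q \circ \varphi. \]
The entire difficulty then reduces to showing that $\varphi$ maps $\D$ into $\D$, for only then is $C_\varphi$ a genuine (and automatically continuous) operator on $\Hol(\D)$. This self-map property is the step I expect to be the main obstacle.

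To establish it I would argue by contradiction, exploiting the continuity of $T$. Suppose some $z_0 \in \D$ had $w_0 := \varphi(z_0)$ with $\abs{w_0} \ge 1$. For an arbitrary $f = \sum_{k \ge 0} a_k e_k \in \Hol(\D)$, its Taylor partial sums $q_N$ converge to $f$ in $\Hol(\D)$, so composing the continuous map $T$ with the continuous evaluation $g \mapsto g(z_0)$ gives
\[ (Tf)(z_0) = \lim_{N \to \infty} (Tq_N)(z_0) = \lim_{N \to \infty} \sum_{k=0}^N a_k w_0^k. \]
Hence $\sum_k a_k w_0^k$ would have to converge for every sequence with $\limsup_k \abs{a_k}^{1/k} \le 1$. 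I would then exhibit an explicit offending function: $a_k = w_0^{-k}$ when $\abs{w_0} > 1$, and $a_k = w_0^{-k}/k$ when $\abs{w_0} = 1$. In each case $\limsup_k \abs{a_k}^{1/k} \le 1$, so $f \in \Hol(\D)$, yet $\sum_k a_k w_0^k$ equals $\sum_k 1$ or $\sum_k 1/k$ and diverges, a contradiction. Therefore $\abs{\varphi(z)} < 1$ throughout $\D$.

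With $\varphi : \D \to \D$ in hand, $C_\varphi$ is a well-defined continuous operator on $\Hol(\D)$; since $T$ and $C_\varphi$ are both continuous and coincide on the polynomials, which are dense in $\Hol(\D)$, they are equal, i.e.\ $T = C_\varphi$. The only genuinely delicate point is producing the divergent power series that forces $\varphi$ to be a self-map; everything else is a routine density-and-continuity argument.
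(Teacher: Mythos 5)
Your proof is correct, but there is nothing in the paper to compare it against line by line: the paper does not prove this lemma at all, importing it verbatim from \cite{ACC} (Proposition 2.1 there). Judged on its own, your argument is complete and follows the natural route: the forward direction is immediate; for the converse, linearity gives $Tq = q\circ\varphi$ for every polynomial $q$ with $\varphi := Te_1$ (note that this step silently uses the $n=0$ instance of the hypothesis, $Te_0 = (Te_1)^0 = e_0$, which is precisely why the statement ranges over $\N_0$ and not $\N$); the crux is the self-map property, which you correctly extract from continuity of $T$ together with continuity of point evaluations on $\Hol(\D)$ and local uniform convergence of Taylor partial sums, and your two test series do the job: for $\abs{w_0}>1$ the coefficients $a_k = w_0^{-k}$ give a function holomorphic on the larger disc $\abs{w_0}\D$ whose image series $\sum_k 1$ diverges, and for $\abs{w_0}=1$ the coefficients $a_k = w_0^{-k}/k$ (start the sum at $k=1$; the $k=0$ term as written is undefined) give radius of convergence exactly $1$ with image series $\sum_k 1/k$ divergent. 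The unimodular case is not a cosmetic afterthought: it is what rules out $\varphi$ constant with value on $\T$, for which $C_\varphi$ would not even be defined on $\Hol(\D)$. The closing density-and-continuity step ($T$ and $C_\varphi$ are continuous and agree on the polynomials, which are dense in $\Hol(\D)$) is standard and correctly invoked. In short: a correct, self-contained, elementary proof of a statement the paper only cites.
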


The following is a stronger form of a result due to Kamowitz \cite[Lem.~1.3]{Kamowitz}.
Note that the proof below gives more, in that the weaker assumption that
$g$ is holomorphic in an annulus $A:=\{z \in \CC: 1-\varepsilon < |z| < 1+\varepsilon\}$ 
with $g(A \cap \overline\D) \subset \overline\D$ is sufficient 
for the conclusion on $X$. 

\begin{theo}\label{Thm - Jonathan amélioré} 
    Let $\varepsilon > 0$ and $g \in \Hol((1+\varepsilon)\D)$ such that $g(\overline \D) \subset \overline \D$.
    
    Then $X := \{\xi \in \T : \abs{g(\xi)} = 1\}$ is either finite, or is equal to $\T$. Moreover, if $X = \T$, then $g$ is a finite Blaschke product.
\end{theo}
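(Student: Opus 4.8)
The plan is to treat the two assertions separately, reducing the dichotomy to the identity theorem and the Blaschke-product conclusion to the maximum and minimum modulus principles.

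For the dichotomy, the key idea is to realise the boundary function $\xi \mapsto \abs{g(\xi)}^2$ as the restriction to $\T$ of a genuinely holomorphic function on an annulus surrounding $\T$. Writing $g(z) = \sum_{n \ge 0} a_n z^n$, I would introduce the reflected series $g^*(z) = \sum_{n \ge 0} \overline{a_n}\, z^n$, which has the same radius of convergence and so is holomorphic on $(1+\varepsilon)\D$. For $\xi \in \T$ one has $\overline{\xi} = 1/\xi$, hence $\overline{g(\xi)} = g^*(1/\xi)$, so that
\[ \Phi(z) := g(z)\, g^*(1/z) \]
is holomorphic on the annulus $A' := \{ z \in \C : (1+\varepsilon)^{-1} < \abs{z} < 1+\varepsilon \}$, which contains $\T$, and satisfies $\Phi(\xi) = \abs{g(\xi)}^2$ there. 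Since $A'$ is connected, the identity theorem leaves only two possibilities for $\Phi - 1$: either it vanishes identically, in which case $\abs{g} \equiv 1$ on $\T$ and $X = \T$; or its zeros are isolated, in which case $X$, being a discrete subset of the compact set $\T$, is finite. (The same reflection, via Laurent series, works if $g$ is only holomorphic on an annulus around $\T$, which accounts for the remark preceding the statement.)

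For the second assertion, suppose $X = \T$, so that $\abs{g} \equiv 1$ on $\T$; in particular $g$ is not identically zero and has no zeros on $\T$. Its zeros in $\D$ are isolated and contained in the compact set $\overline{\D}$, hence finite in number; let $z_1, \dots, z_n$ be these zeros, repeated according to multiplicity. I would form the finite Blaschke product
\[ B(z) = \prod_{j=1}^{n} \frac{z - z_j}{1 - \overline{z_j}\, z}, \]
whose poles $1/\overline{z_j}$ lie outside $\overline{\D}$, so that $B$ is holomorphic on a neighbourhood of $\overline{\D}$, has precisely the zeros of $g$ in $\D$, and satisfies $\abs{B} \equiv 1$ on $\T$. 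Then $h := g/B$ has only removable singularities at the $z_j$ and extends to a zero-free holomorphic function on a neighbourhood of $\overline{\D}$ with $\abs{h} \equiv 1$ on $\T$.

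To conclude, I would apply the maximum modulus principle to $h$ and to $1/h$ (legitimate since $h$ is zero-free on $\overline{\D}$): the former yields $\abs{h} \le 1$ on $\D$ and the latter $\abs{h} \ge 1$, so $\abs{h} \equiv 1$ on $\D$. A holomorphic function of constant modulus on a domain is constant, whence $h \equiv c$ with $\abs{c} = 1$ and $g = c\,B$ is a finite Blaschke product (the case $n = 0$ recovering the unimodular constants). The main obstacle is the first part: one must resist manipulating $\abs{g}^2$ as a real object and instead recognise that the correct tool is the holomorphic reflection $\Phi$, for which the hypothesis $g \in \Hol((1+\varepsilon)\D)$ — rather than mere holomorphy on $\D$ — is exactly what provides holomorphy on a full annulus around $\T$ and thus licences the identity theorem.
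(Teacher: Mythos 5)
Your proof is correct, and it takes a genuinely different route from the paper's. For the dichotomy, the authors argue locally and by contradiction: using $g(\D)\subset\D$ (open mapping theorem) they view $X=g^{-1}(\T)$ as a closed subset of $\T$, extract an accumulation point $z_0$ of $X$ lying on the edge of a complementary arc of $\T\setminus X$, transport everything to the real line via two M\"obius maps $\phi_1,\phi_2$, and then show by induction that every Taylor coefficient of $h=\phi_2\circ g\circ\phi_1^{-1}$ at $0$ is real, contradicting $h(b_n)\notin\R$ for a real sequence $b_n\to0$ coming from $\T\setminus X$. Your function $\Phi(z)=g(z)\,g^*(1/z)$ is exactly the Schwarz-type reflection that this coefficient induction reconstructs by hand; it buys a one-line dichotomy from the identity theorem on the annulus, shows in passing that the hypothesis $g(\overline\D)\subset\overline\D$ is not even needed for this half of the statement, and, as you note, immediately yields (via Laurent series) the annulus refinement that the authors announce before the theorem as the extra strength of their argument. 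For the second assertion the paper simply cites the classical fact that a function holomorphic across $\T$ with $g(\T)\subset\T$ is a finite Blaschke product (\cite{GMR, Nikolski}), whereas you reprove it: dividing by the Blaschke product built from the finitely many zeros in $\D$ and applying the maximum modulus principle to $h$ and $1/h$ is the standard argument, and your justification of the finiteness of the zero set is complete (since $\abs{g}\equiv1$ on $\T$ forces $g\not\equiv0$ and keeps zeros off $\T$, the zeros in $\overline\D$ form a finite set). In sum, your write-up is more self-contained and arguably slicker; the paper's dichotomy argument is more elementary in the machinery it invokes but longer, and it leans on the literature for the Blaschke step.
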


\begin{proof}
    Note that $g(\D) \subset \D$. Indeed, by the open mapping theorem, if there exists $z_0 \in \D$ such that $\abs{g(z_0)} = 1$, then there exists some $z_1 \in \D$ close to $z_0$ such that $\abs{g(z_1)} > 1$, which is impossible. \medskip

    Consider now $g$ as a continuous map from $\T$ to $\overline \D$. Then $X = g^{-1}(\T)$ is a closed subset of $\T$ (since $g(\D) \subset \D$). 
    
    Assume that $X$ is not finite.
    Then, the set $\T \backslash X$ is an open subset of $\T$, that is a union of an infinite number of open intervals of $\T$. 
    We look at the edges of these intervals, which lie in $X$. 
    If none of these edges is a limit of points of $X$, then the set $X$ only consists of isolated points (the edges). But since $X$ is infinite, by the Bolzano--Weierstrass theorem, $X$ must contain a limit point, contradicting the assumption.
    Hence, one of the edges is indeed a limit point of $X$, denoted in the following as $z_0$. \medskip

    This point satisfies $\abs{g(z_0)} = 1$, and there exist two sequences $(u_n) \subset X$ and $(v_n) \subset \T \backslash X$ (it is sufficient to take points from the chosen interval) which tend to $z_0$, such that $\abs{g(u_n)} = 1$ and $\abs{g(v_n)} < 1$. \medskip
    
    We set $\phi_1$, $\phi_2$ two conformal maps from $\T$ to $\R \cup \{\infty\}$ defined by
    \[ \phi_1(z) = i \frac{z-z_0}{z+z_0}, \qquad
        \phi_2(z) = i \frac{z-g(z_0)}{z+g(z_0)}. \]
    They satisfy $\phi_1(z_0) = 0$ and $\phi_2(g(z_0)) = 0$. Denote $h = \phi_2 \circ g \circ \phi_1^{-1}$.
    
    Since $\phi_1^{-1}$ is holomorphic near $0$, as is $\phi_2$ near $g(z_0)$, the function $h$ is holomorphic around $0$, satisfies $h(0) = 0$ and there exist two sequences $(a_n), (b_n) \subset \R$ (corresponding to $(u_n)$ and $(v_n) \subset \T$) such that $a_n \to 0$, $b_n \to 0$, $h(a_n) \in \R$ and $h(b_n) \not\in \R$. However,
    \[ h'(0) = \lim_{n \to \infty} \frac{h(a_n)}{a_n} \in \R. \]
    
    The map $h_1 : z \mapsto h(z)/z - h'(0)$ is also holomorphic around $0$, and satisfies $h_1(a_n) \in \R$. Then, in a same way, we get $h_1'(0) = h''(0) \in \R$.
    Iterating this reasoning, all the derivatives of $h$ at $0$ must be real, so $h(b_n) \in \R$ (since $b_n \in \R$). Impossible.

    Finally, we must have $\T \backslash X = \varnothing$, i.e., $X = \T$, that is $g(\T) \subset \T$. Hence, $g$ is a finite Blaschke product, using \cite{GMR, Nikolski}.
\end{proof}

Next, in Section \ref{Section - Similarités isométries Hol}, we focus on the composition, multiplication, and weighted composition operators that are similar to a linear isometry of $\Hol(\D)$. We obtain a complete description of those operators (Theorem \ref{Thm - Similarités isométries Hol}). \smallskip

In Section \ref{Section - Isométries 1 semi-norme}, we give a characterisation of the linear isometries for a single seminorm $\norm{\cdot}_{\infty, 1-1/p}$ (Theorem \ref{Thm - 1 semi-norme}). There are more than for two seminorms, as expected, and the results from the disc algebra, by El-Gebeily and Wolfe (\cite{El-G-W}), will be in the spotlight in this section.
We also need the following lemma.

\begin{lem}\label{Lemme du train}
    Let $\varepsilon > 0$, and $f \in \Hol((1+\varepsilon)\D)$ such that $f(\overline \D) = \overline \D$ and $f((1+\varepsilon)\D) \subset (1+\varepsilon)\D$. Then, $f$ is a rotation.
\end{lem}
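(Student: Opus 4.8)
The plan is to first reduce to the case of a finite Blaschke product by invoking Theorem \ref{Thm - Jonathan amélioré}, and then to exploit the extra hypothesis $f((1+\varepsilon)\D) \subset (1+\varepsilon)\D$ to force the Blaschke product to have degree $1$ with its zero at the origin.

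First I would observe that $f$ is non-constant (its image $\overline\D$ is not a single point) and that $f(\overline\D) = \overline\D$ gives $\max_{\overline\D}\abs{f} = 1$; by the maximum modulus principle $\abs{f(z)} < 1$ for $z \in \D$, so $f(\D) \subset \D$. Setting $X = \{\xi \in \T : \abs{f(\xi)} = 1\}$, each point of $\T$, being in the image $f(\overline\D)$ while $f(\D) \subset \D$, must be the image of a point of $X$; hence $f(X) = \T$ and $X$ is infinite. Theorem \ref{Thm - Jonathan amélioré} then yields $X = \T$ and that $f$ is a finite Blaschke product, of some degree $n \ge 1$.

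Write $r = 1+\varepsilon$. Since $f \in \Hol(r\D)$ with $f(r\D) \subset r\D$, the rational function $f$ can have no pole in $\overline{r\D}$ (a pole on $\abs{z} = r$ would make $\abs{f}$ unbounded on $r\D$), so every zero $a_j$ satisfies $\abs{a_j} < 1/r$, $f$ is holomorphic across $\abs{z} = r$, and $\abs{f} \le r$ there. The crucial transfer step uses the self-inversive identity $f(z)\,\overline{f(1/\bar z)} = 1$ valid for finite Blaschke products: taking $\abs{z} = 1/r$ (so $\abs{1/\bar z} = r$) gives $\abs{f(z)} = 1/\abs{f(1/\bar z)} \ge 1/r$ on the circle $\abs{z} = 1/r$. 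I would then compute the mean of $\log\abs{f}$ over $\abs{z} = 1/r$ by Jensen's formula; since all zeros lie in the disc of radius $1/r$ and all poles lie outside it, a direct evaluation gives the mean value $-n\log r$. Comparing with the pointwise bound $\log\abs{f} \ge -\log r$ forces $-n\log r \ge -\log r$, whence $n = 1$.

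With $n=1$ the Jensen inequality becomes an equality, so $\log\abs{f} \equiv -\log r$, i.e. $\abs{f} = 1/r$ is constant on $\abs{z} = 1/r$. Writing $f = \lambda (z-a)/(1-\bar a z)$ with $\abs{\lambda}=1$ and $\abs{a}<1/r$, this Möbius map sends $\{\abs{z}=1/r\}$ onto $\{\abs{w}=1/r\}$; since Möbius maps preserve symmetric point pairs, the images $f(0) = -\lambda a$ and $f(\infty) = -\lambda/\bar a$ of the pair $\{0,\infty\}$ (symmetric in $\{\abs{z}=1/r\}$) must be symmetric in $\{\abs{w}=1/r\}$, which forces $r^2 = 1$ unless $a = 0$. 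As $r>1$, we conclude $a=0$ and $f(z)=\lambda z$, a rotation. I expect the main obstacle to be the quantitative heart of the argument: converting the mere containment $f(r\D)\subset r\D$ into the sharp interior estimate $\abs{f}\ge 1/r$ on $\abs{z}=1/r$ via the self-inversive identity, running it through Jensen's formula to pin down $n=1$, and then using the rigidity in the equality case (constant modulus on an interior circle) to place the zero exactly at the origin.
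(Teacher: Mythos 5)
Your proof is correct, but it reaches the conclusion by a genuinely different route than the paper's. The reduction is common to both: you invoke Theorem \ref{Thm - Jonathan amélioré} to see that $f$ is a finite Blaschke product (your argument that $X$ must be infinite, since $f(\D)\subset\D$ forces $\T\subset f(X)$, is precisely the trick the paper uses in Step~2 of Theorem \ref{Thm - 2 semi-normes}), and both proofs note that the zeroes lie in $(1+\varepsilon)^{-1}\D$, so that $f$ is holomorphic on a neighbourhood of the closed disc of radius $r=1+\varepsilon$ with $\abs f\le r$ there. After that, the paper argues pointwise and by cases: for $f=e^{i\theta}z^n$ with $n\ge2$, for a single nontrivial factor, and for the general mixed product, it exhibits an explicit point (essentially $z_0=(1+\varepsilon)e^{i\arg\alpha}$, where $\alpha\neq0$ is a zero) at which $\abs{f(z_0)}>1+\varepsilon$, each extra Blaschke factor contributing modulus $>1$ outside $\overline\D$. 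You instead run an integral argument: the self-inversive identity $f(z)\,\overline{f(1/\overline z)}=1$ transfers the boundary bound into $\abs f\ge 1/r$ on $\abs z=1/r$; the mean of $\log\abs f$ over that circle is exactly $-n\log r$ (your factor-by-factor evaluation is sound and, importantly, remains valid when $f(0)=0$, where the naive form of Jensen's formula would not apply); comparison forces $n=1$, the equality case gives $\abs f\equiv 1/r$ on $\abs z=1/r$, and the symmetry principle for Möbius maps (the pair $\{0,\infty\}$ and its image) places the zero at the origin since $r^2\neq1$. Your approach buys uniformity -- no case analysis on the shape of the Blaschke product, and a quantitative statement (the circular mean of $\log\abs f$ detects the degree) that would survive weaker containment hypotheses -- at the cost of heavier tools; the paper's approach is more elementary and shows concretely where the containment $f((1+\varepsilon)\D)\subset(1+\varepsilon)\D$ fails. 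One cosmetic remark: the pole associated with a zero $a_j$ is $1/\overline{a_j}$ only for $a_j\neq0$ (a zero at the origin has its pole at $\infty$), so your sentence ``every zero $a_j$ satisfies $\abs{a_j}<1/r$'' should be read with the convention $1/\overline 0=\infty$; this affects nothing in the argument.
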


\begin{proof}
    Using Theorem \ref{Thm - Jonathan amélioré}, we know that $f$ is a finite Blaschke product. Moreover, the zeroes of $f$ are in $(1+\varepsilon)^{-1} \D$. We consider three cases.

    \begin{enumerate}
        \item If $f(z) = e^{i\theta} z^n$, for some integer $n \ge 2$, then for $z_0 = 1 + a \varepsilon$,
        \[ \abs{f(z_0)} = \left( 1+ a \varepsilon \right)^n > 1+\varepsilon, \]
        for $a \in (0, 1)$ sufficiently close to 1. Hence, $f((1+\varepsilon)\D) \not\subset (1+\varepsilon)\D$. \medskip

        \item Assume that $f(z) = e^{i\theta} \frac{z-\alpha}{1 - \overline \alpha z}$, for some $0 < \abs{\alpha} < \frac{1}{1+\varepsilon}$. Since $f$ is continuous, we have
        \[ f((1+\varepsilon)\D) \subset (1+\varepsilon)\D
            \; \Longrightarrow \; f((1+\varepsilon)\overline \D) \subset (1+\varepsilon)\overline \D. \]
        Hence, for all $x \in \R$, 
        \[ \abs{\frac{(1+\varepsilon)e^{ix} - \alpha}
                {1 - (1+\varepsilon)\overline \alpha e^{ix}}} \le 1+ \varepsilon
            \iff \abs{\frac{1 - \frac{\alpha e^{-ix}}{1+\varepsilon}}
                {1 - (1+\varepsilon)\overline \alpha e^{ix}}} \le 1. \]
        In particular, for $x$ such that $\alpha e^{-ix} = \abs{\alpha}$, we obtain
        \begin{equation}\label{Eq}
            1 - \frac{\abs{\alpha}}{1+\varepsilon} \le 1 - (1+\varepsilon)\abs \alpha
            \iff \frac{1}{1+\varepsilon} \ge 1+ \varepsilon.
        \end{equation} 
        This last inequality is impossible. \medskip

        \item \uline{General case}: Let $f$ be a finite Blaschke product which is not a rotation. Assume that $f$ has at least two zeroes (counting the multiplicities), and that $0$ is not the only zero of $f$. Hence, we may write
        \[ f(z) = e^{i\theta} z^n \frac{z-\alpha}{1 - \overline \alpha z}
            \times \prod_{i = 1}^k \frac{z- \alpha_i}{1 - \overline \alpha_i z}, \]
        with $0 < \abs{\alpha}, \abs{\alpha_i} < \frac{1}{1+\varepsilon}$. Once again, since $f$ is continuous, we must have
        \[ f((1+\varepsilon)\overline \D) \subset (1+\varepsilon)\overline \D. \]
        Take $z_0 = (1+\varepsilon)e^{i\arg(\alpha)}$. Then, since the inequalities (\ref{Eq}) are impossible,
        \[ \abs{\frac{z_0 -\alpha}{1 - \overline \alpha z_0}}
            = \abs{\frac{(1+\varepsilon) - \abs{\alpha}}{1 - (1+\varepsilon)\abs{\alpha}}}
            = (1+\varepsilon) \abs{\frac{1 - \frac{\abs{\alpha}}{1+\varepsilon}}
                    {1 -(1+\varepsilon) \abs{\alpha}}}
            > 1+\varepsilon. \]
        Moreover, $\abs{e^{i\theta} z_0^n} > 1$. Finally, since $\abs{z_0} > 1$ and for all $i \in \{1, \cdots, k\}$, $\abs{\alpha_i} < 1$, we get
        \[ \abs{\prod_{i = 1}^k \frac{z_0 - \alpha_i}{1 - \overline \alpha_i z_0}} > 1. \]
        Thus, $\abs{f(z_0)} > 1+\varepsilon$, a contradiction.
    \end{enumerate}

    To conclude, the function $f$ must be a rotation.
\end{proof}

Finally, in Section 5, we compute, using the results of \cite{ABBC-rot}, the spectra of the operators found in the previous sections. The common property of these weighted composition operators is that the symbol is a rotation. Hence, we have to consider separately the operators with constant weights and those whose weights are finite Blaschke products.

\section{Isometries of \texorpdfstring{$\Hol(\D)$}{Hol(D)}}
    \label{Section - Isométries Hol}

We begin by characterising all the isometric operators of $\Hol(\D)$. To do this, we first focus on the operators that are isometric for \textbf{two} different seminorms $\norm{\cdot}_{\infty, p}$.

\begin{theo}[Main theorem]\label{Thm - 2 semi-normes} \;
    
    Let $0 < r_1 < r_2 < 1$ be fixed, and $T : \Hol(\D) \to \Hol(\D)$ a linear and continuous operator such that for all $f \in \Hol(\D)$ and $i \in \{1, 2\}$,
    \[ \norm{T(f)}_{\infty, r_i} = \norm{f}_{\infty, r_i}
        = \sup_{\abs{z} \le r_i} \abs{f(z)}. \]
    Then there exist two constants $\alpha, \beta \in \T$ such that for all $f \in \Hol(\D)$,
    \[ T(f)(z) = \alpha f(\beta z) =: T_{\alpha, \beta}(f)(z). \]
\end{theo}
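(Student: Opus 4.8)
The plan is to normalise $T$ and then identify it on the monomial basis $e_n(z)=z^n$. First I would pin down $T1$: since $\norm{T1}_{\infty,r_i}=\norm{1}_{\infty,r_i}=1$ for $i=1,2$ and the sup-modulus of a non-constant holomorphic function is strictly increasing in the radius, the equality $\norm{T1}_{\infty,r_1}=\norm{T1}_{\infty,r_2}$ forces $T1$ to be a constant $\alpha$ with $\abs{\alpha}=1$. Replacing $T$ by $\overline\alpha T$ (still isometric for both seminorms) I may assume $T1=1$; it then suffices to prove that the normalised operator is $C_\varphi$ with $\varphi(z)=\beta z$, $\abs{\beta}=1$, since restoring $\alpha$ gives $T_{\alpha,\beta}$.

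The engine is a first-order (linearisation) argument. For $h\in\Hol(\D)$ and small real $t>0$ one has $\norm{1+th}_{\infty,r}=1+t\max_{\abs{z}\le r}\mathrm{Re}\,h(z)+o(t)$, because $\abs{1+th}^2=1+2t\,\mathrm{Re}\,h+t^2\abs{h}^2$. Applying this to $h$ and to $Th$, and using $T(1+th)=1+t\,Th$ together with the isometry, yields for every direction $\psi\in\R$ and $i\in\{1,2\}$
\[ \max_{\abs{z}\le r_i}\mathrm{Re}\bigl(e^{i\psi}h(z)\bigr)=\max_{\abs{z}\le r_i}\mathrm{Re}\bigl(e^{i\psi}(Th)(z)\bigr). \]
In other words $T$ preserves the support function, hence the closed convex hull, of the image $h(r_i\overline\D)$ on both disks.

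I would feed the monomials into this identity. Since $e_n(r_i\overline\D)=r_i^n\overline\D$ is a disk, the convex hull of $(Te_n)(r_i\overline\D)$ must be $r_i^n\overline\D$; thus $(Te_n)(r_i\overline\D)\subseteq r_i^n\overline\D$ while its boundary circle $r_i^n\T$ is covered. Rescaling, $z\mapsto (Te_n)(r_iz)/r_i^n$ is a holomorphic self-map of $\D$ whose boundary values cover $\T$, so its unimodular set on $\T$ is infinite; by Theorem \ref{Thm - Jonathan amélioré} it is a finite Blaschke product, and in particular $\abs{Te_n}\equiv r_i^n$ on each circle $\abs{z}=r_i$. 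Having two radii is decisive here: $\log\abs{Te_n}-n\log\abs{z}$ vanishes on both boundary circles of the annulus $r_1\le\abs{z}\le r_2$, and a Schwarz-reflection argument across the two circles forces $\abs{Te_n(z)}=\abs{z}^n$ throughout the annulus (in particular $Te_n$ has no zeros there); hence $Te_n(z)/z^n$ is a unimodular constant $\beta_n$, i.e. $Te_n=\beta_n z^n$ with $\abs{\beta_n}=1$, and $\beta_0=1$, $\beta_1=:\beta$.

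It remains to match the phases, $\beta_n=\beta^n$, and this is the main obstacle. After conjugating $T$ by the rotation isometry $C_{z\mapsto\beta z}$ I may assume $\beta=1$ and must show $\beta_n=1$ for all $n$. The difficulty is that the relative phase is invisible to one- or two-frequency test functions: for $f=a+bz^k$ the rotation $z\mapsto\omega z$ sweeps $e^{ik\arg\omega}$ over the whole circle, so $\norm{f}_{\infty,r}=\abs{a}+\abs{b}r^k$ ignores $\arg b$, and the same washing-out occurs for any two-term combination. I would therefore test on functions with at least three active frequencies, e.g. $1+z+z^n$, where the fixed constant term prevents such realignment, and compare $\norm{1+z+z^n}_{\infty,r_i}$ with $\norm{1+z+\beta_n z^n}_{\infty,r_i}$ for $i=1,2$; exploiting that the maximum modulus genuinely depends on $\arg\beta_n$ pins it to $0$, so $\beta_n=1$. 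Once $Te_n=(Te_1)^n$ is established for every $n$, Lemma \ref{Lemme - ACC} identifies the normalised operator as the composition operator $C_\varphi$ with $\varphi=Te_1=\beta z$; restoring the constant $\alpha$ yields $T=T_{\alpha,\beta}$. I expect the phase-matching computation to be the delicate point, with the exclusion of annulus zeros a secondary technicality.
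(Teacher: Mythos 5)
Your proposal is correct in substance and reaches the theorem by a genuinely different route at two of the three key stages, so let me compare. Where the paper proves $r_i^n\T \subset (\tilde Te_n)(r_i\overline\D) \subset r_i^n\overline\D$ by testing the isometry on the explicit function $e_0 + e^{-i\theta}e_n$ and estimating the supremum directly (its Step 2), you derive the same inclusions from the first-order expansion $\norm{1+th}_{\infty,r} = 1 + t\max_{\abs{z}\le r}\mathrm{Re}\,h + O(t^2)$: the isometry then preserves support functions, hence closed convex hulls, and the circle-covering follows because the extreme points of the closed convex hull of a compact planar set belong to the set (Milman) --- a step worth one explicit line in a write-up, but sound. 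Where the paper writes out both representations $f_n(z) = r_i^n B_i(z/r_i)$ and matches zeros and poles of the two Blaschke products to exclude nontrivial factors (its Steps 3--4), you instead work on the annulus $r_1 \le \abs{z} \le r_2$; and your phase-matching via $1+z+z^n$ with equality in the triangle inequality is correct and slightly more direct than the paper's induction using triples $e_k+e_{k+1}+e_{k+2}$. A pleasant by-product of your annulus route is that you only need the dichotomy part of Theorem \ref{Thm - Jonathan amélioré} ($X$ infinite implies $X=\T$), not the Blaschke-product conclusion itself.

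Two points need repair, both local. First, the annulus step as you state it is not yet a proof: $\log\abs{Te_n} - n\log\abs{z}$ is merely \emph{subharmonic} on the annulus, so vanishing boundary values only give $\le 0$ inside, and \enquote{Schwarz reflection} does not by itself exclude zeros. The correct implementation is the reflection identity: with $F = Te_n$, the function $F(z)\,\overline{F(r_1^2/\overline z)}$ is holomorphic on $r_1^2 < \abs{z} < 1$ and equals $\abs{F}^2 = r_1^{2n}$ on $\abs{z} = r_1$, hence is identically $r_1^{2n}$ there by the identity theorem; thus $F$ is zero-free on $r_1^2 < \abs{z} < 1$, and then $h = F/z^n$ is zero-free on the closed annulus with $\abs{h} = 1$ on both circles, so the maximum principle applied to $h$ and $1/h$ gives $h$ constant unimodular, i.e.\ $Te_n = \beta_n e_n$ on $\D$. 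Second, your normalisation \enquote{$\beta = 1$ after conjugating by $C_{z\mapsto\beta z}$} is a genuine slip: rotations commute with every operator that is diagonal on the monomials, so conjugation leaves all the $\beta_n$ unchanged. Either post-compose with the isometry $C_{z\mapsto\overline\beta z}$, or skip the normalisation altogether: the equality $\norm{1 + \beta z + \beta_n z^n}_{\infty,r} = 1 + r + r^n$ forces, for some $\theta$, the alignment $\arg\beta + \theta \equiv 0$ and $\arg\beta_n + n\theta \equiv 0 \pmod{2\pi}$, whence $\beta_n = \beta^n$ directly, and Lemma \ref{Lemme - ACC} concludes as you say. With these two fixes your argument is complete and, in my view, no harder than the paper's.
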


\begin{proof}
    We are going to use Lemma \ref{Lemme - ACC}. Recall that for all $n \in \N_0$, $e_n(z) = z^n$.
    
    \uline{Step 1}: We begin by showing that $Te_0 = \alpha e_0$, with $\abs{\alpha} = 1$.
    Indeed, since $T$ is isometric for $\norm{\cdot}_{\infty, r_1}$ and $\norm{\cdot}_{\infty, r_2}$, we obtain
    \[ \norm{Te_0}_{\infty, r_1} = \norm{Te_0}_{\infty, r_2} = 1. \]
    Using the maximum modulus principle, the map $Te_0$ is constant and unimodular. \medskip
    
    In the following, consider $\tilde T := \overline \alpha T$, so that $\tilde T$ is still isometric for the two seminorms $\norm{\cdot}_{\infty, r_1}$ and $\norm{\cdot}_{\infty, r_2}$, and $\tilde T e_0 = e_0$. \medskip

    \uline{Step 2}: Set $r = r_1$ or $r_2$. We show that for all $n \ge 1$,
    \[ r^n \T \subset (\tilde T e_n)(r\overline \D) =: K. \]

    Indeed, assume that there exists $\xi = r^n e^{i\theta}$ such that $\xi \not\in K$, which is a compact subset of $\overline \D$. Then $\delta = d(\xi, K) > 0$, and $K \subset r^n \overline \D \backslash D(\xi, \delta)$. We set $f = e_0 + e^{-i\theta} e_n$. Hence,
    \[ \tilde T f = e_0 + e^{-i\theta} \tilde T e_n. \]
    Now, compare the seminorms: we have $\norm{f}_{\infty, r} = 1 + r^n$ and
    \begin{align*} 
        \norm{\tilde Tf}_{\infty, r} 
        & = \sup_{\abs{z} \le r} \abs{1 + e^{-i\theta} (\tilde Te_n)(z)} \\
        & = \sup_{w \in K} \abs{1 + e^{-i\theta} w}
        \le \sup_{\abs{z} \le r^n, \abs{z - r^n} \ge \delta} \abs{1 + z}
            < 1 + r^n.
    \end{align*}
    This is impossible, since $\tilde T$ is isometric for $\norm{\cdot}_{\infty, r}$. Therefore, $r^n \T \subset K$. \\
    Moreover, because $\tilde T$ is an isometry, we get $K \subset r^n \overline \D$. \medskip

    In the following, we set $f_n = \tilde Te_n$, and for all $0 < r < 1$, we consider $g_{n, r}$ defined by
    \[ g_{n, r}(z) = \frac{1}{r^n} f_n(rz). \]
    Then $g_{n, r}$ is holomorphic on $r^{-1}\D$, and satisfies $\T \subset g_{n, r}(\overline \D) \subset \overline \D$. 
    Since $g_{n, r}(\D) \subset \D$, if we set $X = \{\xi \in \T : \abs{g_{n, r}(\xi)} = 1\}$, then if $X$ was finite, we would have $g_{n, r}(X)$ also finite. Impossible because $\T \subset g_{n, r}(\overline \D)$. Therefore, $X = \T$ by Theorem \ref{Thm - Jonathan amélioré}.
    This implies that the maps $B_1 = g_{n, r_1}$ and $B_2 = g_{n, r_2}$ are finite Blaschke products. Hence,
    \[ f_n(z) = \begin{cases}
        r_1^n B_1(z/r_1), & \abs{z} < r_1, \\
        r_2^n B_2(z/r_2), & \abs{z} < r_2.
    \end{cases} \]
    
    \uline{Step 3}:  Let us write all the terms of the Blaschke products, for $\abs{z} < r_1 < r_2$. There exist numbers $\alpha_1, \cdots, \alpha_{K_1}$ and $\beta_1, \cdots, \beta_{K_2}$ such that $0 < \abs{\alpha_i}, \abs{\beta_j} < 1$ and for $\abs{z} < r_1$,
    \begin{align}
        & r_1^{n-N_1} e^{i\theta_1} z^{N_1} \prod_{i = 1}^{K_1} 
            \frac{\frac{z}{r_1} - \alpha_i}{1 - \overline \alpha_i \frac{z}{r_1}}
        = r_2^{n-N_2} e^{i\theta_2} z^{N_2} \prod_{j = 1}^{K_2}  \label{Eq-1}
            \frac{\frac{z}{r_2} - \beta_j}{1 - \overline \beta_j \frac{z}{r_2}} \\
        \iff \; & r_1^{n-N_1} e^{i\theta_1} z^{N_1} \prod_{i = 1}^{K_1} 
            \frac{z - r_1 \alpha_i}{r_1 - \overline \alpha_i z}
        = r_2^{n-N_2} e^{i\theta_2} z^{N_2} \prod_{j = 1}^{K_2} 
            \frac{z - r_2 \beta_j}{r_2 - \overline \beta_j z} \label{Eq-2} 
    \end{align}
    
    We study the poles and zeroes of the Blaschke products. 
    In (\ref{Eq-2}), the zero 0 has same order on each side of the equation, so $N_1 = N_2 =: N$. 
    Moreover, the zeroes and poles of the two Blaschke products should coincide, so $K_1 = K_2 =: K$, and for each $i \in \{1, \cdots, K\}$, there exists $j \in \{1, \cdots, K\}$ such that
    \[ \alpha_i r_1 = \beta_j r_2 \quad \text{and} \quad
        \frac{r_1}{\overline \alpha_i} = \frac{r_2}{\overline \beta_j}. \]

    In particular, we obtain
    \[ \frac{r_1}{r_2} = \frac{\beta_j}{\alpha_i}
        = \frac{\overline \alpha_i}{\overline \beta_j}, \]
    so $\abs{\alpha_i}^2 = \abs{\beta_j}^2$, and $r_1 = r_2$. Impossible.
    Ergo, the Blaschke products $B_1$ and $B_2$ are of the form
    \[ B_1(z) = e^{i\theta_1} z^{N_1}, \qquad B_2(z) = e^{i\theta_2} z^{N_2}. \]

    \uline{Step 4}:  Putting the formulas in the expression of $f_n$, we get
    \[ f_n(z) = r_1^n e^{i\theta_1} \frac{z^{N_1}}{r_1^{N_1}}
        = r_2^n e^{i\theta_2} \frac{z^{N_2}}{r_2^{N_2}}, \qquad 
                \abs{z} < r_1 \le r_2. \]
                
    The order of the zero $0$ is unique, so we obtain $N_1 = N_2 =: N$, and
    \[ r_1^{n-N} e^{i\theta_1} z^N = r_2^{n-N} e^{i\theta_2} z^N. \]
    
    Identifying the modulus and argument of the coefficients, we have $r_1^{n-N} = r_2^{n-N}$, so $n=N$ (since $r_1 \neq r_2$), and $e^{i\theta_1} = e^{i\theta_2} =: \delta_n$, that is
    \[ f_n(z) = e^{i\theta} z^n = \delta_n e_n(z), \qquad \abs{z} < r_1. \]

    Using analytic continuation, the last equality is valid for $z \in \D$, so we have proved that for all $n \in \N$, there exists $\delta_n \in \T$ such that $\tilde Te_n = \delta_n e_n$. \medskip

    \uline{Step 5}:  Since $\tilde T$ is isometric, for all $r \in \{r_1, r_2\}$,
    \begin{align*} 
        \sup_{\abs{z} = r} \abs{\tilde T(e_k + e_{k+1} + e_{k+2})(z)}
        & = \sup_{\abs{z} = r} \abs{(\delta_k e_k 
            + \delta_{k+1}e_{k+1} + \delta_{k+2}e_{k+2})(z)} \\
        & = \sup_{\abs{z} = r} \abs{\delta_k z^k 
            + \delta_{k+1}z^{k+1} + \delta_{k+2} z^{k+2}} \\
        & = \sup_{\abs{z} = r} \abs{z^k + z^{k+1} + z^{k+2}}.
    \end{align*}

    Dividing by $z^k$, we obtain
    \[ \sup_{\abs{z} = r} \abs{\delta_k + \delta_{k+1}z + \delta_{k+2}z^2}
        = \sup_{\abs{z} = r} \abs{1 + z + z^2} 
        = 1 + r + r^2. \]

    Factorising by $\delta_k$, since $\abs{\delta_k} = 1$, we have
    \[ \sup_{\abs{z} = r} \abs{1 + \frac{\delta_{k+1}}{\delta_k} z 
            + \frac{\delta_{k+2}}{\delta_k} z^2}
        = \sup_{\abs{z} = r} \abs{1 + z + z^2} 
        = 1 + r + r^2. \]

    Using the triangle inequality, for all $z$ satisfying $\abs{z} = r$,
    \[ \abs{1+\frac{\delta_{k+1}}{\delta_k}z+\frac{\delta_{k+2}}{\delta_k}z^2}
        \le 1 + r + r^2, \]
    with equality if and only if
    \[ 0 = \arg(1) \equiv \arg\left[\frac{\delta_{k+1}}{\delta_k}z\right] 
        \equiv \arg\left[\frac{\delta_{k+2}}{\delta_k}z^2\right] \mod 2\pi. \]
        
    Denoting $\theta = \arg[z]$, this gives
    \[ 0 \equiv \arg[\delta_{k+1}] - \arg[\delta_k] + \theta 
        \equiv \arg[\delta_{k+2}] - \arg[\delta_k] + 2\theta \mod 2\pi. \]
        
    Finally,
    \begin{align*}
        & \arg{[\delta_k]} - 2\arg{[\delta_{k+1}]} + \arg{[\delta_{k+2}]} \\
        = \; & (\arg{[\delta_{k+2}]} - \arg{[\delta_k]} + 2\theta)
            - 2(\arg{[\delta_{k+1}]} - \arg{[\delta_k]} + \theta)
        \equiv 0 \mod 2\pi.
    \end{align*}
    
    \uline{Step 6}:  We show by induction that for all $n \in \N_0$,
    $\arg[\delta_n] \equiv n\arg[\delta_1] \mod 2\pi$.
    \begin{itemize}[label=\sbt]
        \item $n = 0$ or $1$: It is immediate (see Step 1 for $n = 0$). \medskip

        \item Assume that the formula is valid for some nonnegative integers $n$ and $n+1$. From Step 5, $\arg{[\delta_n]} - 2\arg{[\delta_{n+1}]} + \arg{[\delta_{n+2}]} \equiv 0 \mod 2\pi$.
        Using induction hypothesis, we have 
        \[ n\arg[\delta_1] - 2(n+1)\arg[\delta_1] + \arg[\delta_{n+2}] \equiv 0 \mod 2\pi. \]
        After rearranging the terms, we obtain $\arg[\delta_{n+2}] \equiv (n+2)\arg[\delta_1] \mod 2\pi$.

        
    \end{itemize}

    \fbox{Conclusion:}  For all $n \in \N_0$, $\tilde T(e_n) = \delta_1^n e_n$.
    Hence, $\tilde T(e_n) = (\tilde T(e_1))^n$, so $\tilde T$ is a composition operator, with symbol $\varphi = \tilde T(e_1) = \delta_1 e_1$, by Lemma \ref{Lemme - ACC}.
    Finally, multiplying by $\alpha$, for all $f \in \Hol(\D)$,
    \[ (Tf)(z) = \alpha f(\delta_1 z). \qedhere \]
\end{proof}

We obtain the following corollary.

\begin{coro}
    The only linear operators $T : \Hol(\D) \to \Hol(\D)$ that are isometries of $\Hol(\D)$ are those of the form $T_{\alpha, \beta}$, with $\abs{\alpha} = \abs{\beta} = 1$.
\end{coro}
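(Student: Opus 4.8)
The plan is to derive the corollary directly from Lemma~\ref{Lemme - Isométrie Fréchet} and the main theorem (Theorem~\ref{Thm - 2 semi-normes}), so that essentially no new work is required beyond assembling the two directions of the equivalence.

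For the forward direction, suppose $T$ is a linear isometry of $\Hol(\D)$ for the Fréchet metric $d_X$ associated with the seminorms $\norm{\cdot}_{\infty, 1-1/p}$, $p \in \N$. Being an isometry, $T$ is in particular $1$-Lipschitz, hence continuous. By Lemma~\ref{Lemme - Isométrie Fréchet} the isometry condition is equivalent to $\norm{Tf}_{\infty, 1-1/p} = \norm{f}_{\infty, 1-1/p}$ for every $f \in \Hol(\D)$ and every $p$. In particular $T$ preserves the two seminorms attached to, say, $p = 2$ and $p = 3$, that is $r_1 = 1/2$ and $r_2 = 2/3$, which satisfy $0 < r_1 < r_2 < 1$. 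Theorem~\ref{Thm - 2 semi-normes} then applies verbatim and yields $\alpha, \beta \in \T$ with $T = T_{\alpha, \beta}$.

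For the converse, I would verify that every $T_{\alpha, \beta}$ with $\abs{\alpha} = \abs{\beta} = 1$ is indeed an isometry. Since $\beta \in \T$, the map $z \mapsto \beta z$ sends each disc $\abs{z} \le r$ bijectively onto itself, so for every $r \in (0,1)$ and every $f$,
\[ \norm{T_{\alpha,\beta} f}_{\infty, r} = \sup_{\abs{z} \le r} \abs{\alpha f(\beta z)} = \sup_{\abs{w} \le r} \abs{f(w)} = \norm{f}_{\infty, r}, \]
using $\abs{\alpha} = 1$ and the change of variable $w = \beta z$. Thus $T_{\alpha, \beta}$ preserves every seminorm $\norm{\cdot}_{\infty, 1-1/p}$, and Lemma~\ref{Lemme - Isométrie Fréchet} shows it is an isometry of $\Hol(\D)$.

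The proof carries no genuine obstacle: all the difficulty has already been absorbed into Theorem~\ref{Thm - 2 semi-normes}. The only point deserving a moment's care is that the corollary concerns the single Fréchet metric $d_X$, whereas the main theorem is stated for two fixed seminorms; the bridge is precisely Lemma~\ref{Lemme - Isométrie Fréchet}, which lets one pass from \emph{isometric for $d_X$} to \emph{isometric for each seminorm}, and hence to any chosen pair $r_1 < r_2$. If one instead uses the metric $d'_X$, the second lemma plays the identical role.
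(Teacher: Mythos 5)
Your proof is correct and follows essentially the same route as the paper: Lemma~\ref{Lemme - Isométrie Fréchet} converts the metric isometry condition into preservation of every seminorm, any two of which feed into Theorem~\ref{Thm - 2 semi-normes}, and the converse is the same change-of-variables computation. Your explicit remarks that isometry gives continuity (needed to invoke the theorem) and that a valid pair $0 < r_1 < r_2 < 1$ must be chosen (e.g.\ $p = 2, 3$, avoiding $p = 1$) are points the paper leaves implicit, but they do not change the argument.
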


\begin{proof}
    Let $T$ be a linear isometry of $\Hol(\D)$. Then, in particular, $T$ is a linear isometry for two different seminorms associated with $\Hol(\D)$, using Lemma \ref{Lemme - Isométrie Fréchet}. Theorem \ref{Thm - 2 semi-normes} concludes. \medskip

    Conversely, let $\abs{\alpha} = \abs{\beta} = 1$. Then, for all $p \in \N$ and $f \in \Hol(\D)$,
    \[ \norm{T_{\alpha, \beta}(f)}_{\infty, 1-1/p}
        = \sup_{\abs{z} \le 1-1/p} \abs{\alpha f(\beta z)}
        = \sup_{\abs{w} \le 1-1/p} \abs{f(w)} = \norm{f}_{\infty, 1-1/p}. \]
    Hence, $T_{\alpha, \beta}$ is a linear isometry of $\Hol(\D)$.
\end{proof}

\section{Operators similar to an isometry on \texorpdfstring{$\Hol(\D)$}{Hol}}
    \label{Section - Similarités isométries Hol}

In this section, we will focus on linear operators of $\Hol(\D)$ that are \textit{similar} to an isometry of $\Hol(\D)$. Let us recall the concept of \textit{similarity}.

\begin{defi}\label{Def - Similaire}
    Two linear operators $T, V \in \Lcont(\Hol(\D))$ are \textbf{similar} if there exists some $U \in \Lcont(\Hol(\D))$ invertible such that $U^{-1}TU = V$.
\end{defi}

In the following, we will only consider some classes of operators: composition, multiplication, and weighted composition ones. The goal of this section is to obtain a characterisation of such operators that are similar to a linear isometry of $\Hol(\D)$. The main theorem is the following.

\begin{theo}\label{Thm - Similarités isométries Hol}
    Let $T$ be a continuous operator on $\Hol(\D)$.
    \begin{enumerate}
        \item $T = C_\varphi$ is a composition operator similar to a linear isometry of $\Hol(\D)$ if and only if $\varphi$ is bijective and elliptic, i.e., with a fixed point on $\D$. \medskip

        \item $T = M_m$ is a multiplication operator similar to a linear isometry of $\Hol(\D)$ if and only if $m$ is constant and unimodular. \medskip

        \item $T = W_{m, \varphi}$ is a weighted composition operator similar to a linear isometry of $\Hol(\D)$ if and only if $\varphi$ is bijective and elliptic with $\sigma_p(W_{m, \varphi}) \cap \T \neq \varnothing$.
    \end{enumerate}
\end{theo}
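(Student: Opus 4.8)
The plan is to reduce everything to the classification already obtained (Theorem \ref{Thm - 2 semi-normes} and its corollary): the isometries of $\Hol(\D)$ are exactly the $T_{\alpha,\beta}$ with $\abs\alpha=\abs\beta=1$, and these satisfy $T_{\alpha,\beta}e_n=\alpha\beta^n e_n$, so $\sigma_p(T_{\alpha,\beta})=\{\alpha\beta^n:n\in\N_0\}\subset\T$, with eigenvectors the monomials. I would use three similarity invariants throughout: invertibility, the point spectrum $\sigma_p$ (with eigenspace dimensions), and equicontinuity of the two-sided orbit $\{V^n:n\in\Z\}$. The last holds whenever $V$ is similar to an isometry, since $T_{\alpha,\beta}^n=T_{\alpha^n,\beta^n}$ and its inverse are all isometries for every seminorm, hence $\{T_{\alpha,\beta}^n\}$ is equicontinuous, and conjugation by a fixed $U\in\Lcont(\Hol(\D))$ preserves equicontinuity.

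For part (2) I would argue spectrally. If $M_m f=\lambda f$ then $(m-\lambda)f=0$, so $\sigma_p(M_m)=\varnothing$ when $m$ is non-constant and $\sigma_p(M_m)=\{c\}$ when $m\equiv c$. Since $\alpha\in\sigma_p(T_{\alpha,\beta})$, similarity forces $m$ constant, say $m\equiv c$; then $M_m=cI$ is central, so $cI\sim T_{\alpha,\beta}$ forces $T_{\alpha,\beta}=cI$, whence $\beta=1$ and $c=\alpha\in\T$. The converse is immediate since $M_\alpha=T_{\alpha,1}$ is itself an isometry.

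For (1) and (3) the forward implication splits into invertibility and ellipticity. Invertibility of $T_{\alpha,\beta}$ forces $W_{m,\varphi}=M_mC_\varphi$ invertible, hence $\varphi\in\Aut(\D)$ and $m$ zero-free. To obtain ellipticity I would rule out a boundary fixed point: if $\varphi$ is parabolic or hyperbolic, then by Denjoy--Wolff $\varphi_n\to\zeta\in\T$ uniformly on compacta, and I claim $\{W_{m,\varphi}^n\}$ is not equicontinuous. Indeed, equicontinuity would give $r',C$ with $\sup_{\abs z\le r}\abs{m_n(z)}\,\abs{f(\varphi_n(z))}\le C\norm{f}_{\infty,r'}$ for all $n,f$, where $m_n=\prod_{j=0}^{n-1}m\circ\varphi_j$; fixing $n$ large so that $\abs{\omega_n}>r'$ with $\omega_n:=\varphi_n(0)$ and testing against $p_N(\zeta)=(\zeta/\omega_n)^N$, the left side is $\ge\abs{m_n(0)}$ while $\norm{p_N}_{\infty,r'}=(r'/\abs{\omega_n})^N\to0$, forcing $m_n(0)=0$, contradicting that $m$ (hence $m_n$) is zero-free. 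So $\varphi$ has an interior fixed point and, being an automorphism, is elliptic. Finally $\sigma_p(W_{m,\varphi})=\sigma_p(T_{\alpha,\beta})\subset\T$ is nonempty, giving the spectral condition; when $m\equiv1$ this is automatic since $1\in\sigma_p(C_\varphi)$, recovering the clean statement of (1).

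The converse of (3) is the technical heart and proceeds by conjugation. Writing $\sigma\in\Aut(\D)$ with $\sigma(a)=0$ for the fixed point $a$ of $\varphi$, conjugation by $C_{\sigma^{-1}}$ sends $W_{m,\varphi}$ to $W_{w,R_\mu}$ with $w=m\circ\sigma^{-1}$ and $R_\mu(z)=\mu z$, $\abs\mu=1$; this preserves $\sigma_p$, so I may assume $\varphi=R_\mu$ and that there exist $\lambda\in\T$ and $g\neq0$ with $w(z)g(\mu z)=\lambda g(z)$. Comparing orders of vanishing gives $n_g(z)=n_g(\mu z)+n_w(z)$; when $\mu$ is not a root of unity, density of the orbit $\{\mu^k z\}$ on its circle together with this monotonicity forces $n_g\equiv0$, so $g$ is zero-free and $M_g^{-1}W_{w,R_\mu}M_g=T_{\lambda,\mu}$ is an isometry. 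When $\mu$ is a primitive $q$-th root of unity, $W_{w,R_\mu}^q=M_{w_q}$ with $w_q=\prod_{j=0}^{q-1}w(\mu^j z)$, and the eigenvalue relation forces $w_q\equiv\lambda^q$; since $\lambda\in\T$ this gives $\abs{w(0)}=1$, and writing $w=e^{u}$ the coboundary equation $v(\mu z)-v(z)=u(0)-u(z)$ is solvable in $\Hol(\D)$ precisely because the constancy of $w_q$ annihilates the resonant coefficients $a_{qk}$, producing a zero-free $h=e^{v}$ with $W_{w,R_\mu}h=w(0)h$ and $M_h^{-1}W_{w,R_\mu}M_h=T_{w(0),\mu}$. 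I expect this root-of-unity case to be the main obstacle: one must see that the single unimodular eigenvalue supplied by the hypothesis simultaneously forces the weight to be unimodular at the fixed point and removes the obstruction to the cohomological equation, so the weight can be conjugated to a unimodular constant. Part (1) is then the special case $m\equiv1$, where $\sigma$-conjugation alone yields $T_{1,\mu}$ with no coboundary step.
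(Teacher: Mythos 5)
Your proposal is correct in outline, and your converse of (iii) follows the paper's own skeleton: conjugate by $C_\sigma$ to reduce to a rotation $R_\mu$, split into periodic/aperiodic $\mu$, and remove the weight by conjugating with a multiplication operator. The necessity arguments, however, are genuinely different and more self-contained than the paper's. For (ii) the paper runs a two-case norm-growth argument ($\abs{m}<1$ everywhere forces $\norm{U^{-1}\indic}_{\infty,1-1/p}\to 0$; $\abs{m(z_0)}>1$ contradicts the uniform bound on $UV^nU^{-1}$) and finishes with the maximum modulus principle; you instead note $\sigma_p(M_m)=\varnothing$ for non-constant $m$, that every isometry $T_{\alpha,\beta}$ has $\alpha\in\sigma_p$, and that $cI$ is central --- shorter, and estimate-free. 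For ellipticity the paper tests the isometry identity against $g(z)=z-\omega$ (Denjoy--Wolff point) via continuity of $U^{-1}$, and for weighted operators imports the trichotomy $\sigma_p(W_{m,\varphi})\in\{\C^*,\varnothing\}$ from \cite{ABCC} together with $\sigma_p(V)\neq\varnothing$ from \cite{ABBC-rot}; your equicontinuity-of-powers invariant combined with the test polynomials $(z/\omega_n)^N$ handles $C_\varphi$ and $W_{m,\varphi}$ uniformly and avoids both citations (you correctly secure $m$ zero-free from invertibility before using it). In the periodic case the paper quotes \cite[Lemma 3.5, Theorem 3.4]{ABBC-rot} to get the form of the weight and to translate ``$m_N$ constant unimodular'' into the spectral condition; your derivation of $w_q\equiv\lambda^q$ by iterating the eigenvalue equation, and the observation that this kills exactly the resonant coefficients of $u$, replaces those citations by a direct computation. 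Do record explicitly that the non-resonant divisors $\abs{\mu^\ell-1}$ take finitely many nonzero values (as $\mu$ is a root of unity), hence are bounded below, so your $v$ has radius of convergence at least that of $u$ and $h=e^v\in\Hol(\D)$; the periodic case has no small-divisor problem.

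One genuine (but reparable) slip: in the aperiodic case your claim that $n_g\equiv 0$ fails at the origin. The relation $n_g(z)=n_w(z)+n_g(\mu z)$ propagates a zero of $g$ at $z_0\neq 0$ along the backward orbit $\{\mu^{-k}z_0\}$, producing infinitely many zeros on a compact circle, whence $g\equiv 0$; but the orbit of $0$ is $\{0\}$, and an eigenvector can indeed vanish there: $w\equiv 1$, $g=e_1$, $\lambda=\mu$ satisfies $w(z)g(\mu z)=\lambda g(z)$ with $g(0)=0$, and then your $M_g$ is not invertible. The repair is immediate: write $g(z)=z^nh(z)$ with $h(0)\neq 0$; then $h$ is an eigenvector for the eigenvalue $\lambda\mu^{-n}\in\T$, the circle argument shows $h$ is zero-free, and $M_h^{-1}W_{w,R_\mu}M_h=T_{\lambda\mu^{-n},\mu}$, which is still an isometry. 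It is worth noting that the paper's own proof of this step (``for all $z \in \D$, $f(z)\neq 0$'') has the identical blind spot at $z_0=0$, and the same factorisation fixes it there too.
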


It is now time to prove this result. We recall that the linear isometries of $\Hol(\D)$ are defined by
\[ T_{\alpha, \beta}(f)(z) = \alpha f(\beta z), \qquad
    f \in \Hol(\D), \qquad \abs{\alpha} = \abs{\beta} = 1. \]
as we proved in Theorem \ref{Thm - 2 semi-normes}.

\subsection{Composition operators - Proof of Theorem \ref{Thm - Similarités isométries Hol}.(\textit{i})}\label{Sous-section - Opérateurs de composition} \; \medskip

First, assume that $\varphi$ is bijective and elliptic. Denote by $\alpha \in \D$ its fixed point. Consider the map $\psi$ defined by
\begin{equation*}\label{Definition de psi}
    \psi(z) = \frac{\alpha-z}{1-\overline \alpha z}.
\end{equation*}

Then, $\psi \in \Hol(\D)$, $\psi$ is bijective, $\psi^{-1} = \psi$, and $\psi(\alpha) = 0$. Hence, $\tilde \varphi = \psi \circ \varphi \circ \psi$ is a bijective self-map of $\D$, with a fixed point at $0$. By Schwarz's Lemma (\cite{Abate, Rudin}), there exists $\beta \in \C$ such that $\abs{\beta} = 1$ and $\tilde \varphi(z) = \beta z$. Hence,
\begin{equation*}
    C_\psi \circ C_\varphi \circ C_\psi = C_{\tilde \varphi} = T_{1, \beta}.
\end{equation*}
The operator $C_\varphi$ is therefore similar to a linear isometry of $\Hol(\D)$. \medskip

Now, consider $\varphi$ not both bijective and elliptic. If $C_\varphi$ was similar to a linear isometry of $\Hol(\D)$, then we would write
\[ C_\varphi = UVU^{-1} \iff U^{-1} C_\varphi U = V, \]
with $V$ isometric. Iterating this equality $n$ times, and we get
\[ U^{-1} C_\varphi^n U = U^{-1} C_{\varphi^{[n]}} U = V^n. \]

Thanks to Lemma \ref{Lemme - Isométrie Fréchet}, for all $n, p \in \N$ and $f \in \Hol(\D)$,
\[ \norm{(U^{-1} C_\varphi^n U)(f)}_{\infty, 1-1/p}
    = \norm{V^n(f)}_{\infty, p} = \norm{f}_{\infty, 1-1/p}. \]
    
Since $U$ is invertible, writing $g = U(f)$, for all $p \in \N$ and $g \in \Hol(\D)$,
\[ \norm{(U^{-1}C_\varphi^n)(g)}_{\infty,1-1/p}
    = \norm{U^{-1}(g \circ \varphi^{[n]})}_{\infty, 1-1/p}
    = \norm{U^{-1}(g)}_{\infty, 1-1/p}. \]
    
By the Denjoy--Wolff theory (\cite{BCD, Cowen-MacCluer}), there exists a point $\omega \in \overline{\D}$ such that the iterates $\varphi^{[n]}$ of $\varphi$ converge uniformly on all compact subsets of $\D$ to $\omega$. Finally, considering $g(z) = z - \omega$, we have $g \in \Hol(\D)$, and using the continuity of $U$,
\[ \norm{U^{-1}(g)}_{\infty, 1-1/p}
    = \norm{U^{-1}(\varphi^{[n]} - \omega)}_{\infty, 1-1/p} 
        \xrightarrow[n \to +\infty]{} 0. \]
        
However, $g \not\equiv 0$, so $U^{-1}(g) \not\equiv 0$ (because $U$ is invertible). Thus, there exists $p \in \N$ such that $\norm{U^{-1}(g)}_{\infty, 1-1/p} \neq 0$, leading to an absurdity. \medskip

\fbox{Conclusion:} $C_\varphi$ is similar to a linear isometry of $\Hol(\D)$ if and only if $\varphi$ is bijective and elliptic.
The proof of Theorem \ref{Thm - Similarités isométries Hol}.$(i)$ is then complete.

\subsection{Multiplication operators - Proof of Theorem \ref{Thm - Similarités isométries Hol}.(\textit{ii})}\label{Sous-section - Opérateurs de multiplication} \; \medskip

Assume that $m$ is constant and unimodular. Denote by $\alpha$ this constant. Therefore, for all $f \in \Hol(\D)$, we may write
\[ [M_m(f)](z) = m(z)f(z) = \alpha f(z) = [T_{\alpha, 1}(f)](z). \]
Hence, $M_m = T_{\alpha, 1}$ is a linear isometry of $\Hol(\D)$. \medskip

Conversely, let $M_m$ be similar to a linear isometry of $\Hol(\D)$. We may write $M_m = UVU^{-1}$,
with $U$ invertible and $V$ isometric. Iterating this $n$ times, we obtain
\[ M_m^n = M_{m^n} = UV^n U^{-1}, \]
with $V^n$ also isometric (by induction). We will consider two cases.

\begin{itemize}[label=\sbt]
    \item Assume that for all $z \in \D$, $\abs{m(z)} < 1$. Then, for all $p \in \N$, $\norm{m}_{\infty, 1-1/p} = c < 1$. Hence,
    \begin{equation}\label{Eq - Normes de m^n}
        \norm{m^n}_{\infty, 1-1/p} = c^n \xrightarrow[n \to \infty]{} 0.
    \end{equation}
    Consider $\indic$, the map defined by $\indic(z) = 1$ for all $z \in \D$. Then, for all $n \in \N_0$, since $V^n$ is isometric and $V^n U^{-1} = U^{-1} M_{m^n}$, for all $p \in \N$, by Lemma \ref{Lemme - Isométrie Fréchet},
    \begin{align*}
        \norm{U^{-1}(\indic)}_{\infty, 1-1/p}
        = \norm{V^nU^{-1}(\indic)}_{\infty, 1-1/p}
        & = \norm{U^{-1}M_{m^n}(\indic)}_{\infty, 1-1/p} \\
        & = \norm{U^{-1}(m^n)}_{\infty, 1-1/p} 
        \xrightarrow[n \to +\infty]{} 0,
    \end{align*} 
    using the continuity of $U$ and (\ref{Eq - Normes de m^n}).
    But $U^{-1}(\indic) \not\equiv 0$ (since $U$ is invertible), so there exists $p \in \N$ such that $\norm{U^{-1}(\indic)}_{\infty, 1-1/p} \neq 0$.
    Impossible. \medskip

    \item Assume that there exists $z_0 \in \D$ such that $\abs{m(z_0)} > 1$.
    Then, for all $p \in \N$, $n \in \N_0$ and $f \in \Hol(\D)$, we have
    \[ \norm{m^n f}_{\infty, 1-1/p} 
        = \norm{UV^n U^{-1}(f)}_{\infty, 1-1/p}. \]
    In particular, for $p = p_0$ such that $\abs{z_0} \le 1 - \frac{1}{p_0}$ and $f = \indic$,
    \[ \norm{U V^n U^{-1}(\indic)}_{\infty, 1-1/p_0} 
        = \norm{m^n}_{\infty, 1-1/p_0} 
        \xrightarrow[n \to \infty]{} +\infty. \]
    However, $U$ is continuous and $V$ is isometric, so there exist $C > 0$ and $q \in \N$ such that
    \[ \norm{U V^n U^{-1}(\indic)}_{\infty, 1-1/p_0} 
        \le C \norm{U^{-1}(\indic)}_{\infty, 1-1/q} < +\infty. \medskip \]
\end{itemize}

The two cases lead to an absurdity. Ergo, for all $z \in \D$, $\abs{m(z)} \le 1$, and there exists $z_0 \in \D$ such that $\abs{m(z_0)} = 1$. Using the maximum modulus principle, we have shown that $m$ is constant, of modulus one. \medskip

\fbox{Conclusion:} $M_m$ is similar to a linear isometry of $\Hol(\D)$ if and only if $m$ is constant and unimodular.
The proof of Theorem \ref{Thm - Similarités isométries Hol}.$(ii)$ is then complete.

\begin{rem}
    We have proved that there is no multiplication operator similar to a linear isometry of $\Hol(\D)$, other than the isometries themselves.
\end{rem}

\subsection{Weighted composition operators - Proof of Theorem \ref{Thm - Similarités isométries Hol}.(\textit{iii})}\label{Sous-section - Opérateurs de composition pondérés} \; \medskip

To obtain the third and last result of Theorem \ref{Thm - Similarités isométries Hol}, we begin with a simple observation.

\begin{lem}\label{Lemme - Prop similaire isométries comp pond}
    Let $\varphi : \D \to \D$ holomorphic, and $m \in \Hol(\D)$.
    
    If $W_{m, \varphi}$ is similar to a linear isometry of $\Hol(\D)$, then $W_{m, \varphi}$ is invertible.
\end{lem}

\begin{proof}
    If the operator $W_{m, \varphi}$ is similar to a linear isometry of $\Hol(\D)$, then we have
    \[ W_{m, \varphi} = UVU^{-1}, \]
    with $U$ invertible and $V$ isometric. Since all linear isometries of $\Hol(\D)$ are invertible (Theorem \ref{Thm - 2 semi-normes} and \cite[Proposition 2.1]{ABCC}), $W_{m, \varphi}$ is also invertible, with inverse $UV^{-1}U^{-1}$.
\end{proof}

Therefore, in the following, using \cite[Proposition 2.1]{ABCC}, we may consider only weighted composition operators with symbol $\varphi$ bijective, and weight $m$ such that $\forall z \in \D, m(z) \neq 0$. \medskip

\uline{1\textsuperscript{st} case}: Assume that $\varphi$ is not elliptic. Then, by \cite[Proposition 2.3, Theorems 6.1 \& 7.1]{ABCC}, there are only two possiblities: either $\sigma_p(W_{m, \varphi}) = \C^*$, or $\sigma_p(W_{m, \varphi}) = \varnothing$.

If $\sigma_p(W_{m, \varphi}) = \C^*$, consider $f \in \Hol(\D) \backslash \{0\}$ such that $W_{m, \varphi}(f) = 2f$.
Suppose that $W_{m, \varphi}$ is similar to a linear isometry of $\Hol(\D)$. We may write $W_{m, \varphi} = UVU^{-1}$, with $U$ invertible and $V$ isometric. An easy induction gives $W_{m, \varphi}^n = UV^n U^{-1}$ for $n \in \N$, so
\[ W_{m, \varphi}^n (f) = 2^n f = UV^n U^{-1}f. \]

Since $U$, $U^{-1}$ and $V$ are continuous, there exists a constant $c > 0$ such that
\[ \norm{UV^n U^{-1}f}_{\infty, 1/2} \le c. \]

But $f \not\equiv 0$, so $\norm{f}_{\infty, 1/2} > 0$, and
\[ \norm{W_{m, \varphi}^n (f)}_{\infty, 1/2} 
    = \norm{2^n f}_{\infty, 1/2} = 2^n \norm{f}_{\infty, 1/2}
    \xrightarrow[n \to \infty]{} +\infty. \]
We have obtained an absurdity. \medskip

If $\sigma_p(W_{m, \varphi}) = \varnothing$ and $W_{m, \varphi}$ is similar to a linear isometry $V$ of $\Hol(\D)$, then
\[ \varnothing = \sigma_p(W_{m, \varphi})
    = \sigma_p(UVU^{-1}) = \sigma_p(V), \]
since $U$ is invertible. But $\sigma_p(V) \neq \varnothing$, using Theorem \ref{Thm - 2 semi-normes} and \cite[Theorems 3.4 \& 3.7]{ABBC-rot}.
\medskip

\fbox{1\textsuperscript{st} conclusion:} In order to make $W_{m, \varphi}$ similar to a linear isometry of $\Hol(\D)$, the symbol $\varphi$ must be bijective and elliptic. \medskip

Now, we consider symbols $\varphi$ that are elliptic and bijective. Without loss of generality (using the map $\psi$ p.\pageref{Definition de psi}), we will assume that the fixed point of $\varphi$ is $0$.
Using Schwarz's Lemma (\cite{Abate, Rudin}), we can write
\[ \varphi(z) = \beta z, \qquad \abs{\beta} = 1. \]

\begin{defi}\label{Def - Périodique et apériodique}
    Let $\varphi : z \mapsto \beta z$, with $\abs{\beta} = 1$.
    \begin{itemize}[label=\sbt]
        \item The map $\varphi$ is \textbf{periodic} if there exists $N \in \N$ such that $\beta^N = 1$.
        \item The map $\varphi$ is \textbf{aperiodic} if for all $n \in \N$, $\beta^n \neq 1$.
    \end{itemize}
\end{defi}

First, we will assume that $\abs{m(0)} = 1$. Indeed, we have the following lemma.

\begin{lem}
    Let $\varphi : \D \to \D$ be elliptic and bijective such that $\varphi(0) = 0$, and $m \in \Hol(\D)$. 
    
    If $W_{m, \varphi}$ is similar to a linear isometry of $\Hol(\D)$, then $\abs{m(0)} = 1$.
\end{lem}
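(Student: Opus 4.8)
The plan is to show that the eigenvalue of $W_{m,\varphi}$ whose existence is forced by the similarity must be unimodular, and to read off $|m(0)|$ from the eigenvalue equation evaluated at the fixed point $0$.

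First I would record the structural consequence of the similarity. Since $W_{m,\varphi}=UVU^{-1}$ with $V$ a linear isometry of $\Hol(\D)$, Theorem \ref{Thm - 2 semi-normes} gives $V=T_{\alpha,\gamma}$ with $|\alpha|=|\gamma|=1$, and (invoking \cite[Theorems 3.4 \& 3.7]{ABBC-rot} as in the proof of part (iii)) such an isometry has a nonempty point spectrum consisting entirely of unimodular values. Because $\sigma_p$ is a similarity invariant, $\sigma_p(W_{m,\varphi})=\sigma_p(V)$, so $W_{m,\varphi}$ has at least one eigenvalue $\lambda$ with $|\lambda|=1$. This is the key leverage: the point spectrum of a weighted composition operator with symbol $\varphi(z)=\beta z$ is controlled by the behaviour at the fixed point.

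Next I would exploit the eigenvalue equation at the fixed point. If $W_{m,\varphi}f=\lambda f$ for some $f\in\Hol(\D)\setminus\{0\}$, then $m(z)f(\beta z)=\lambda f(z)$ for all $z\in\D$. Writing $f(z)=c_k z^k+\cdots$ with $c_k\neq 0$ the lowest nonzero Taylor coefficient, and evaluating the lowest-order behaviour, one gets $m(0)\beta^k c_k=\lambda c_k$, hence
\[ \lambda=m(0)\beta^{k}. \]
Since $|\lambda|=1$ and $|\beta|=1$, this forces $|m(0)|=1$, which is exactly the desired conclusion.

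The step I expect to be the main obstacle is justifying cleanly that every eigenvalue of $W_{m,\varphi}$ arises from a genuine power-series lowest-order analysis of the form above, i.e.\ that an eigenfunction cannot vanish to infinite order at $0$ (which is immediate since $f\not\equiv 0$ is holomorphic) and that the matching of lowest-degree coefficients is legitimate. This is where the assumption $\varphi(0)=0$ and the explicit form $\varphi(z)=\beta z$ are essential: they make $0$ a common fixed point so that substituting $z=0$ in the relation $m(z)f(\beta z)=\lambda f(z)$ after dividing by $z^k$ yields the coefficient identity. Everything else is routine bookkeeping, and the unimodularity of the forced eigenvalue does the real work.
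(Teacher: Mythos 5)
Your proof is correct, but it takes a genuinely different route from the paper's. The paper never mentions spectra in its proof of this lemma: it iterates the similarity, observes that $\norm{W_{m,\varphi}^n(g)}_{\infty,0} = \abs{m(0)}^n\abs{g(0)}$ because $\varphi(0)=0$, and plays this exponential growth or decay against the fact that $\norm{V^n\,\cdot}_{\infty,0}$ is constant in $n$ (isometry for every seminorm, via Lemma \ref{Lemme - Isométrie Fréchet}) while $U$ and $U^{-1}$ are continuous, splitting into the two cases $\abs{m(0)}<1$ and $\abs{m(0)}>1$; its only inputs are elementary. You instead invoke the classification Theorem \ref{Thm - 2 semi-normes} to write $V=T_{\alpha,\gamma}$, use similarity-invariance of the point spectrum (legitimate, since $U$ is a linear homeomorphism) to transport a unimodular eigenvalue to $W_{m,\varphi}$, and read off $\lambda=m(0)\beta^k$ from the lowest nonvanishing Taylor coefficient of an eigenfunction; note you do not even need \cite{ABBC-rot} here, since $T_{\alpha,\gamma}e_n=\alpha\gamma^n e_n$ gives nonemptiness directly, and your own lowest-order argument applied to $T_{\alpha,\gamma}$ shows every eigenvalue is of the form $\alpha\gamma^k\in\T$. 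Two small points of hygiene: you rely on the rotation form $\varphi(z)=\beta z$, which is justified by Schwarz's lemma exactly as the paper records just before stating the lemma, so this is fine and not circular (the classification theorem is also already available at this stage). What your route buys: it handles both cases $\abs{m(0)}<1$ and $\abs{m(0)}>1$ at once with no asymptotics; the coefficient-matching is insensitive to whether the eigenfunction vanishes at $0$, a degeneracy ($g(0)$ possibly $0$) that the paper's first case passes over rather quickly; and it actually proves more, namely $\sigma_p(W_{m,\varphi})\cap\T\neq\varnothing$ together with the identity $\lambda=m(0)\beta^k$, which anticipates the necessity half of Theorem \ref{Thm - Similarités isométries Hol}$(iii)$ and the paper's own device of applying the similarity to $\indic$ in the aperiodic case. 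What the paper's route buys is economy of means: no classification theorem, no spectral theory, just seminorm estimates.
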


\begin{proof}
    If $\abs{m(0)} < 1$, then considering the map $g = U^{-1} \indic$, with $\indic(z) = 1$, we obtain
    \[ \norm{U^{-1}V^n U g}_{\infty, 0} = \norm{W_{m, \varphi}^n(g)}_{\infty, 0}
        = \abs{m(0)}^n \abs{g(0)} \xrightarrow[n \to \infty]{} 0. \]
    Using the continuity of $U$, we get $1 = \norm{\indic}_{\infty, 0} = \norm{V^n U g}_{\infty, 0} \longrightarrow 0$, which is impossible. \medskip
    
    If $\abs{m(0)} > 1$, then 
    \[ \norm{UV^n U^{-1} \indic}_{\infty, 0} = \norm{W_{m, \varphi}^n(\indic)}_{\infty, 0}
        = \abs{m(0)}^n \xrightarrow[n \to \infty]{} +\infty. \]
    However, there exists $p \in \N$ and $c > 0$ such that
    \[ \norm{UV^n U^{-1} \indic}_{\infty, 0}
        \le c \norm{V^n U^{-1} \indic}_{\infty, 1-1/p}
        = c \norm{U^{-1} \indic}_{\infty, 1-1/p} < +\infty. \]
    This leads to an absurdity.
\end{proof}

Now, we consider separately periodic and aperiodic symbols. \medskip

\uline{2\textsuperscript{nd} case}: Assume that $\varphi : z \mapsto \beta z$ is elliptic periodic. Then, there exists $N \in \N$ such that
\[ W_{m, \varphi}^N = M_{m_N}, \quad \text{with} \quad
    m_N(z) = \prod_{k = 0}^{N-1} (m \circ \varphi^{[k]})(z)
        = \prod_{k=0}^{N-1} m(\beta^k z). \]
        
If $W_{m, \varphi}$ is similar to a linear isometry of $\Hol(\D)$, then so is $W_{m, \varphi}^N = M_{m_N}$.
Using Theorem \ref{Thm - Similarités isométries Hol}.$(ii)$ (proved in Section \ref{Sous-section - Opérateurs de multiplication}), the map $m_N$ must be constant, and unimodular. \medskip

Conversely, if $m_N$ is constant and unimodular, thanks to \cite[Lemma 3.5]{ABBC-rot}, since $\abs{m(0)} = 1$, we may write
\[ m = \exp(\tilde m), \quad \text{with} \quad
    \tilde m = i\theta + \sum_{k=1}^{N-1} z^k f_k(z^N), \qquad
    \theta \in \R, \qquad f_1, \cdots, f_{N-1} \in \Hol(\D). \]
    
Moreover, we show that there exists $U$ invertible such that $UW_{m, \varphi}U^{-1} = V$, with $V$ a linear isometry of $\Hol(\D)$. Indeed, we look at particular $U$ and $V$:
\[ U = M_w, \qquad V = T_{e^{i\theta}, \beta}, \]
with $w \in \Hol(\D)$ not vanishing on $\D$, $e^{i\theta}$ in the definition of $\tilde m$, and $\beta$ in the definition of $\varphi$.
Note that since $w$ does not vanish on $\D$, we may write $w = \exp(\tilde w)$.

Then, for all $h \in \Hol(\D)$ and $z \in \D$,
\begin{align*}
    (UW_{m, \varphi}U^{-1})(h)(z) = V(h)(z)
    & \iff \frac{h(\beta z)}{w(\beta z)} m(z) w(z) 
            = e^{i\theta} h(\beta z) \\
    & \iff \frac{w(z)}{w(\beta z)} \exp\left[\sum_{k=1}^{N-1} z^k f_k(z^N)\right] = 1 \\
    & \iff \exp\left[\tilde w (z) - \tilde w (\beta z) + \sum_{k=1}^{N-1} z^k f_k(z^N)\right] = 1 \\
    & \Longleftarrow \;\; \tilde w (z) - \tilde w (\beta z) + \sum_{k=1}^{N-1} z^k f_k(z^N) = 0.
\end{align*} 
    
Consider $\dsps f_k(z) = \sum_{\ell \ge 0} a_\ell^{(k)} z^\ell$ and $\dsps \tilde w (z) = \sum_{\ell \ge 0} b_\ell z^\ell$.
We are searching for maps $\tilde w$ such that
\begin{equation}\label{Eq - Recherche de w}
   \begin{aligned}
        \tilde w (z) - \tilde w (\beta z) + \sum_{k=1}^{N-1} z^k f_k(z^N)
        & = \sum_{\ell \ge 0} \left[b_\ell (1-\beta^\ell) z^\ell 
            + \sum_{k=1}^{N-1} a_\ell^{(k)} z^{N\ell + k}\right] \\
        & = \sum_{k=1}^{N-1} \left[ \sum_{\ell \ge 0} 
            \left(b_{N\ell + k} (1 - \beta^k) + a_\ell^{(k)}\right) z^{N\ell + k} \right] = 0.
    \end{aligned} 
\end{equation}
Finally, for all $\ell \ge 0$, considering $b_{N\ell} = 0$ and
\[ b_{N\ell + k} = \frac{a_\ell^{(k)}}{\beta^k - 1}, \qquad 1 \le k \le N-1, \]
we obtain the equation (\ref{Eq - Recherche de w}).
Hence, for this particular $w$, we get $M_w W_{m, \varphi} M_w^{-1} = T_{e^{i\theta}, \beta}$. \medskip

\fbox{2\textsuperscript{nd} conclusion:} If $\varphi$ is periodic, then the operator $W_{m, \varphi}$ is similar to a linear isometry of $\Hol(\D)$ if and only if the map $m_N$ is constant and unimodular, that is if and only if $\sigma_p(W_{m, \varphi}) \cap \T \neq \varnothing$, by \cite[Theorem 3.4]{ABBC-rot}. \medskip

\uline{3\textsuperscript{rd} case}: Assume that $\varphi$ is elliptic aperiodic.

If $W_{m, \varphi}$ is similar to a linear isometry of $\Hol(\D)$, then we may write
\[ U^{-1} W_{m, \varphi} U = T_{\alpha, \beta}, \]
for some $\abs{\alpha} = 1$, $\abs{\beta} = 1$. We apply this equality for $f = \indic$. Hence,
\[ (U^{-1}W_{m, \varphi}U)(\indic) 
    = T_{\alpha, \beta}(\indic) = \alpha \indic
    \iff m(U\indic \circ \varphi) = \alpha (U\indic). \]
Therefore, $\alpha \in \T$ is an eigenvalue of $W_{m, \varphi}$. \medskip

Conversely, assume that $\sigma_p(W_{m, \varphi}) \cap \T \neq \varnothing$. Let $\lambda \in \sigma_p(W_{m, \varphi}) \cap \T$, and $f \in \Hol(\D)$ such that
\[ W_{m, \varphi}(f) = m(f \circ \varphi) = \lambda f. \]

Then, for all $z \in \D$, $f(z) \neq 0$. Indeed, assume that there exists $z_0 \in \D$ such that $f(z_0) = 0$. Since $\lambda \neq 0$, by induction, for all $n \in \N_0$, we get $(f \circ \varphi^{[n]})(z_0) = f(\beta^n z_0) = 0$. Because $\varphi$ is aperiodic, it implies that $f = 0$ on $\{z \in \C : \abs{z} = \abs{z_0}\}$, so $f = 0$ on $\D$ using analytic continuation. Let $U = M_f$. Then, $U$ is invertible, and for all $g \in \Hol(\D)$,
\begin{align*}
    (U^{-1} W_{m, \varphi} U)(g) = (U^{-1} W_{m, \varphi})(fg) 
    & = U^{-1}(m(f \circ \varphi)(g \circ \varphi)) \\
    & = \lambda U^{-1} (f(g \circ \varphi)) \\
    & = \lambda (g \circ \varphi) = T_{\lambda, \beta}(g).
\end{align*}
Hence, $W_{m, \varphi}$ is similar to a linear isometry of $\Hol(\D)$.
\medskip

\fbox{Final conclusion:} A weighted composition operator $W_{m, \varphi}$ is similar to a linear isometry of $\Hol(\D)$ if and only if $\varphi$ is bijective and elliptic with $\sigma_p(W_{m, \varphi}) \cap \T \neq \varnothing$. By  \cite[Proposition 3.3 and 3.6]{ABBC-rot} a necessary condition to get a nonempty point spectrum is that $m$ does not vanish on $\D$.  

\section{Isometries of \texorpdfstring{$\Hol(\D)$}{Hol(D)} with a single seminorm}
    \label{Section - Isométries 1 semi-norme}

We now consider the isometries for a \textbf{single} seminorm.
In this case, we expect to get more operators. This is indeed the case.
In order to show it, we will use an analogy with the disc algebra $A(\D)$, a case already discussed by El-Gebeily and Wolfe in \cite{El-G-W}.

We begin this section with a useful proposition.

\begin{prop}[\cite{El-G-W}]\label{Prop - EGW} \;

    Let $S : A(\D) \to A(\D)$ an isometry. Then there exists a closed subset $K \subset \T$, and two maps $\rho : K \to \T$ continuous, $\phi : K \to \T$ continuous and onto, such that for all $f \in A(\D)$ and $\xi \in K$,
    \[ \rho(\xi) \times (Sf)(\xi) = f(\phi(\xi)). \]
\end{prop}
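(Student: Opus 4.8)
The plan is to dualise and read off the structure from the extreme points of the unit ball of $A(\D)^*$. By the maximum modulus principle, $\norm{f} = \sup_{\xi \in \T}\abs{f(\xi)}$ for $f \in A(\D)$, so $A(\D)$ sits isometrically as a closed subspace of $\Cont(\T)$, and I would work with the adjoint $S^* : A(\D)^* \to A(\D)^*$. Since $S$ is an isometry (into), $S^*$ is a weak-$*$ continuous affine map of the dual unit ball $B^*$ \emph{onto} $B^*$: indeed $B^*$ is weak-$*$ compact and $S^*$ weak-$*$ continuous, so $S^*(B^*)$ is weak-$*$ closed and contains the open ball, hence equals $B^*$.

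The external input I would quote is the Arens--Kelley description of the extreme points. As every point of $\T$ is a peak point of $A(\D)$, its Choquet boundary is all of $\T$, so
\[ \mathrm{ext}(B^*) = \{\lambda\,\delta_\xi : \abs{\lambda} = 1,\ \xi \in \T\}, \]
where $\delta_\xi$ denotes evaluation at $\xi$, restricted to $A(\D)$. For $\xi \in \T$ I set $\Lambda_\xi := S^*\delta_\xi \in B^*$, so that $\scal{\Lambda_\xi}{f} = (Sf)(\xi)$, and I define
\[ K := \{\xi \in \T : \Lambda_\xi \in \mathrm{ext}(B^*)\}. \]
For $\xi \in K$ I may write $\Lambda_\xi = \overline{\rho(\xi)}\,\delta_{\phi(\xi)}$ with $\phi(\xi) \in \T$ and $\abs{\rho(\xi)} = 1$ uniquely determined; pairing against $f$ then yields precisely $\rho(\xi)\,(Sf)(\xi) = f(\phi(\xi))$. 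Because $\xi \mapsto \Lambda_\xi$ is weak-$*$ continuous, and both the unimodular scalar and the base point of an extreme functional depend continuously on it, the set $K$ is closed and $\rho, \phi$ are continuous on $K$.

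It remains to show that $\phi : K \to \T$ is onto, and this is the step I expect to be the main obstacle. Here I would exploit that $S^*$ carries $B^*$ onto $B^*$: for a fixed extreme point $\lambda\,\delta_\eta$ of the target, the set $(S^*)^{-1}(\lambda\,\delta_\eta)\cap B^*$ is a nonempty weak-$*$ compact face of $B^*$, hence (Krein--Milman) contains an extreme point $\mu\,\delta_\xi$. Then $\mu\,\Lambda_\xi = \lambda\,\delta_\eta$, so $\Lambda_\xi$ is itself extreme, i.e.\ $\xi \in K$ and $\phi(\xi) = \eta$. As $\eta \in \T$ was arbitrary, $\phi$ is onto. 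The delicate part is justifying that this preimage is a genuine weak-$*$ compact face so that an extreme point can be extracted there, together with the continuity bookkeeping on $K$; once that is in place, the passage from the functional identity $\Lambda_\xi = \overline{\rho(\xi)}\,\delta_{\phi(\xi)}$ to the stated formula is immediate.
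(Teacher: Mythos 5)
Your situation here is special: the paper offers no proof to compare against, since Proposition \ref{Prop - EGW} is imported verbatim from \cite{El-G-W} as external input. Your blind attempt is therefore necessarily a different route, and it is a correct one: the classical extreme-point/duality method for into-isometries, in the tradition of Holszty\'nski's theorem for $\Cont(K)$-spaces, and each of its hinges checks out. First, $S^*$ does map the dual ball $B^*$ onto itself, but phrase the justification correctly: the real content is Hahn--Banach (the norm-$\le 1$ functional $Sf \mapsto x^*(f)$ on the range $S(A(\D))$ extends with the same norm), which already yields the full closed ball at once; your \guillemotleft{} weak-$*$ closed and contains the open ball \guillemotright{} step is redundant and slightly circular, since containing the open ball is itself a Hahn--Banach statement. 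Second, what you need is the function-algebra form of the Arens--Kelley description (not the $\Cont(\T)$ version per se), and it does hold here because every point of $\T$ is a peak point of $A(\D)$ (consider $z \mapsto (1+\overline{\xi}z)/2$); moreover $(\lambda,t)\mapsto \lambda\delta_t$ is a weak-$*$ homeomorphism of $\T\times\T$ onto $\mathrm{ext}(B^*)$ --- one inverts it by testing against $e_0$ and $e_1$ --- and this one-line verification is exactly what makes $K$ closed and $\rho,\phi$ continuous, so the \guillemotleft{} continuity bookkeeping \guillemotright{} you worry about is genuinely trivial. Third, the step you flag as the main obstacle is sound as written: $(S^*)^{-1}(\delta_\eta)\cap B^*$ is nonempty by the ontoness above, weak-$*$ compact, and a face of $B^*$ precisely because $\norm{S^*}\le 1$ keeps $S^*y$ and $S^*z$ in $B^*$ whenever $\delta_\eta = \tfrac{1}{2}(S^*y+S^*z)$, so extremality of $\delta_\eta$ forces $S^*y=S^*z=\delta_\eta$; Krein--Milman then supplies an extreme point of this face, an extreme point of a face of $B^*$ is extreme in $B^*$, and unwinding $S^*(\mu\delta_\xi)=\delta_\eta$ gives $\Lambda_\xi = \overline{\mu}\,\delta_\eta$ extreme, hence $\xi\in K$ and $\phi(\xi)=\eta$. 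Relative to the paper's black-box citation, your argument buys self-containedness and generality --- it works verbatim for an into-isometry of any uniform algebra whose Choquet boundary is identified --- at the cost of invoking Choquet-theoretic machinery that the paper deliberately avoids by quoting \cite{El-G-W}.
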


It will allow us to prove the main theorem of this section.

\begin{theo}\label{Thm - 1 semi-norme}
    Let $0 < r < 1$ fixed, and $T : \Hol(\D) \to \Hol(\D)$ a linear continuous operator such that for all $f \in \Hol(\D)$,
    \[ \norm{T(f)}_{\infty, r} = \norm{f}_{\infty, r}. \]
    Then there exist a map $B_1$, which is constant or a finite Blaschke product, and $\beta \in \T$ such that for all $f \in \Hol(\D)$,
    \[ T(f)(z) = B_1(z/r) f(\beta z). \]
\end{theo}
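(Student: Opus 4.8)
The plan is to reduce the single-seminorm problem to an isometry of the disc algebra $A(\D)$, apply the El-Gebeily--Wolfe structure theorem (Proposition \ref{Prop - EGW}), and then exploit the fact that $T$ takes values in $\Hol(\D)$ --- not merely in the disc algebra of radius $r$ --- to force the composition symbol to be a rotation via Lemma \ref{Lemme du train}.

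First I would rescale. The seminorm $\norm{f}_{\infty,r}$ depends only on $f|_{r\overline\D}$, and the substitution $w = z/r$ identifies it with the supremum norm of $\tilde f(w) := f(rw)$: since $f$ is holomorphic on $\D$, the function $\tilde f$ lies in $\Hol((1/r)\D)$, a dense subspace of $A(\D)$, and $\norm{f}_{\infty,r} = \norm{\tilde f}_{A(\D)}$. The assignment $S : \tilde f \mapsto \widetilde{Tf}$ is a well-defined (by analytic continuation $\tilde f$ determines $f$) linear isometry of $\Hol((1/r)\D)$ into $A(\D)$, and extends to a linear isometry $\bar S : A(\D) \to A(\D)$. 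Proposition \ref{Prop - EGW} then yields a closed set $K \subset \T$, a continuous $\rho : K \to \T$, and a continuous onto map $\phi : K \to \T$ with $\rho(\xi)(\bar S g)(\xi) = g(\phi(\xi))$ for $g \in A(\D)$, $\xi \in K$; here $K$ is infinite because $\phi$ is onto $\T$.

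Next I would feed monomials into this relation. Writing $G_n := \bar S e_n \in \Hol((1/r)\D)$ (so that $G_n(w) = r^{-n}(Te_n)(rw)$ and $\norm{G_n}_{A(\D)} = 1$), the case $g = e_0$ gives $G_0(\xi) = \overline{\rho(\xi)}$, whence $\abs{G_0} = 1$ on the infinite set $K$; by Theorem \ref{Thm - Jonathan amélioré} the function $B_1 := G_0 = (Te_0)(r\,\cdot)$ is a finite Blaschke product (or a unimodular constant). The general relation reads $G_n = B_1\,\phi^n$ on $K$, so $G_1^n = G_0^{n-1}G_n$ there and hence on all of $(1/r)\D$ by the identity theorem. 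A short order-of-vanishing argument at the finitely many zeros of $G_0$ shows $\Psi := G_1/G_0$ is holomorphic on $(1/r)\D$ with $G_n = B_1\Psi^n$. From $\norm{G_n}_{A(\D)} = 1$ for every $n$ one gets $\abs{\Psi} \le 1$ on $\overline\D$ (otherwise $\abs{B_1\Psi^n}$ blows up at a point where $B_1 \neq 0$), and $\Psi = \phi$ on the infinite set $K \setminus \{B_1 = 0\}$ gives $\abs{\Psi} = 1$ there; Theorem \ref{Thm - Jonathan amélioré} again makes $\Psi$ a finite Blaschke product, non-constant because $\phi$ is onto. Expanding $f = \sum_n c_n e_n$ then yields $(Tf)(rw) = B_1(w)\,f(r\Psi(w))$, that is $T = W_{B_1(\cdot/r),\,\varphi}$ with $\varphi(z) = r\Psi(z/r)$.

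The hard part will be the last step: showing $\Psi$ is a rotation rather than an arbitrary finite Blaschke product. Isometry of the single seminorm alone does not achieve this, since for any finite Blaschke products $B_1,\Psi$ one has $\sup_{\abs{w}=1}\abs{B_1(w)f(r\Psi(w))} = \norm{f}_{\infty,r}$ because $\Psi(\T) = \T$. The decisive extra constraint is that $T$ maps into $\Hol(\D)$, forcing $\varphi(\D) \subset \D$, i.e. $\Psi((1/r)\D) \subset (1/r)\D$. I would prove this by contradiction: if $\abs{\Psi(w)} \ge 1/r$ for some $w \in (1/r)\D$, then the level set $\{\abs{\Psi} = 1/r\}$ meets $(1/r)\D$ in an infinite set avoiding the finitely many zeros of $B_1$; choosing $w^\ast$ there and $f(u) = (1 - \overline{r\Psi(w^\ast)}\,u)^{-1} \in \Hol(\D)$ makes $(Tf)(rw)$ blow up at the interior point $w^\ast$, contradicting holomorphy of $(Tf)(r\,\cdot)$. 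With $\Psi((1/r)\D) \subset (1/r)\D$ and $\Psi(\overline\D) = \overline\D$, Lemma \ref{Lemme du train} (applied with $1+\varepsilon = 1/r$) forces $\Psi(w) = \beta w$ with $\abs{\beta} = 1$, and substituting back gives $(Tf)(z) = B_1(z/r)\,f(\beta z)$, as claimed.
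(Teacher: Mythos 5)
Your proposal is correct, but its middle portion takes a genuinely different route from the paper's. Both arguments begin identically (rescale by $U(f)(z)=f(rz)$, extend by density of polynomials to an isometry of $A(\D)$, invoke Proposition \ref{Prop - EGW}) and end identically (Theorem \ref{Thm - Jonathan amélioré} twice to make the weight and the symbol finite Blaschke products, then Lemma \ref{Lemme du train} to force a rotation). The difference is how the weighted-composition form is obtained. The paper first proves that the carrier set $K$ has positive Lebesgue measure, then invokes McDonald's representation theorem \cite[Theorem 1.1]{MacDonald} together with \cite[Proposition 1]{El-G-W} to get $S(f)=\psi(f\circ\varphi)$ globally, with $\varphi$ a possibly singular quotient $h_1/h_2\in H^\infty$, and only afterwards uses holomorphy across $\T$ to clear the singular set $L$. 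You bypass McDonald and the positive-measure step entirely: feeding monomials into the EGW pointwise identity gives $G_n=G_0\phi^n$ on $K$, and since all $G_n$ extend holomorphically to $(1/r)\D$, the identity theorem applied on the infinite set $K\subset\T$ --- exactly the extra regularity unavailable for a general disc-algebra isometry, and the reason your elementary route works here --- combined with your order-of-vanishing division and Taylor summation yields $G_n=B_1\Psi^n$ and hence the weighted-composition form directly. Your route also makes explicit a point the paper dispatches in one line (``we must have $\phi(\D)\subset\D$''): your blow-up argument with $f(u)=(1-\overline{r\Psi(w^\ast)}\,u)^{-1}$ is a genuine justification, though to nail it down you should add that (i) the identity $(Tf)(rw)=B_1(w)f(r\Psi(w))$ holds pointwise wherever $\abs{\Psi(w)}<1/r$, because the partial sums $B_1\sum_{n\le N}c_nr^n\Psi^n$ converge locally uniformly on all of $(1/r)\D$ to $S(Uf)$ by continuity of $S$, and (ii) every point of the level set $\{\abs{\Psi}=1/r\}$ is a limit of points where $\abs{\Psi}<1/r$ (minimum-modulus principle applied to $1/\Psi$, $\Psi$ being non-constant), so the blow-up at $w^\ast$ genuinely contradicts holomorphy of $(Tf)(r\,\cdot)$ at the interior point $w^\ast$. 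In exchange for these small elaborations your proof is more self-contained, while the paper's gets the global representation in one stroke from known disc-algebra theory.
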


\begin{proof}
    To begin, let us consider $U : \Hol(\D) \to \Hol(r^{-1}\D)$ the operator defined by
    \[ U(f)(z) = f(rz), \qquad \abs{z} \le 1/r. \]
    Then, by setting $S = UTU^{-1} : \Hol(r^{-1}\D) \to \Hol(r^{-1}\D)$, the operator $S$ is isometric for $\norm{\cdot}_{\infty, 1}$, that is, for the disc algebra norm. \medskip

    Note that the set of all polynomials is dense in $\Hol(r^{-1}\D)$ and in $A(\D)$ for the norm $\norm{\cdot}_{\infty, 1}$. Hence, we can extend $S$ to an operator $\tilde S : A(\D) \to A(\D)$ so that $\tilde S$ is still isometric for $\norm{\cdot}_{\infty, 1}$. In the following, we will continue to write $S$ even if we consider $\tilde S$. \medskip

    \uline{Step 1}: Recall that for all $n \in \N_0$, we denote $e_n(z) = z^n$, and $f_n = Se_n$. Let $K$ be the set defined in Proposition \ref{Prop - EGW}. Then for all $\xi \in K$,
    \[ \rho(\xi) \times f_0(\xi) = 1, \qquad 
        \rho(\xi) \times f_1(\xi) = \phi(\xi). \]
        
    Using the first equation, we obtain that for all $\xi \in K$, $\rho(\xi) \neq 0$. We can say more: since $\rho$ is continuous on the compact set $K$, there exists $\delta > 0$ such that $\abs{\rho(\xi)} \le \delta$ for $\xi \in K$. We have the same result for $f_0$. Furthermore,
    \[ \phi(\xi ) = \frac{f_1(\xi)}{f_0(\xi)}, \qquad \xi \in K. \]

    We focus on the Lebesgue measure $m(K)$ of the set $K$ relatively to $\T$. We will show that $m(K) > 0$. Indeed, first, we can say that $K$ is not finite (otherwise, we would have $\phi(K) = \T$ also finite, impossible).

    \begin{itemize}[label=\sbt]
        \item If the map $f_0$ is constant, then $\phi$ is of the form $c f_1$, with $c \in \C$. Hence, we can extend $\phi$ to a holomorphic function on $r^{-1}\D$, which is in particular $\mathcal C^1$ on $\T$.
        Assume that $m(K) = 0$, we would get $m(\phi(K)) = m(\T) = 0$, impossible. \medskip

        \item If the map $f_0$ is not constant, since it is analytic on $r^{-1}\D$, because the zeroes are isolated, there is only a finite number of zeroes of $f_0$ that are unimodular. Denote them by $\xi_1, \cdots, \xi_n$. Therefore, the map $\phi$ can be extended to a $\mathcal C^1$ map on $\T \backslash \{\xi_1, \cdots, \xi_n\}$, such that $\phi(K \backslash \{\xi_1, \cdots, \xi_n\})$ contains $\T$, except at most $n$ points.
        Assume that $m(K) = 0$, we would get $m(\T \backslash \{ n \text{ points}\}) = 0$, impossible.
    \end{itemize}

    Finally, we have proved that $m(K) > 0$. \medskip
    
    \uline{Step 2}: By \cite[Theorem 1.1]{MacDonald} and \cite[Proposition 1]{El-G-W}, there exist a function $\varphi \in H^\infty$ of the form $\varphi = h_1/h_2$, with $h_1, h_2 \in A(\DD)$, and $h_2 = 0$ on a certain subset $L \subset \T$; also, another map $\psi \in A(\D)$ such that $\norm{\varphi}_{\infty, 1} = \norm{\psi}_{\infty, 1} = 1$, $\varphi(K) = \T$, $\psi = 0$ on $L$ and $\abs{\psi} = 1$ on $K$ satisfying
    \[ S(f) = \psi(f \circ \varphi), \qquad f \in A(\D). \medskip \] 

    Recall that at the beginning, $S$ is defined for $f \in \Hol(r^{-1}\D)$. Thus, we obtain $\varphi \in \Hol(r^{-1}\D)$, that is $L = \varnothing$. Hence, the map $\psi$ does not vanish on $\T$. 
    In addition, by setting $X = \{\xi \in \T : \abs{\varphi(\xi)} = 1\}$, if $X$ was finite, we would get $\varphi(X)$ finite. This is impossible since $\varphi(K) = \T$, so $\T \subset \varphi(\overline \D)$. By Theorem \ref{Thm - Jonathan amélioré}, $\varphi$ is a finite Blaschke product. \medskip
    
    Now, assume that $\psi$ is not constant. If $\psi$ is not a finite Blaschke product, note that since $\norm{\psi}_{\infty, 1} = 1$, we have $\psi(\overline \D) \subset \overline \D$. By Theorem \ref{Thm - Jonathan amélioré}, the set $X = \{ \xi \in \T : \abs{\psi(\xi)} = 1\}$ is finite. Choose $z_0 \in \T \backslash X$ such that $\varphi^{-1}(\varphi(z_0)) \subset \T \backslash X$; this is possible since $X$ is finite. Set
    \[ g(z) = \frac{\overline{\varphi(z_0)} z + 1}{2}. \]
    Then $\norm{g}_{\infty, 1} = 1$, and $\abs{g(\varphi(z))} = 1 \iff \varphi(z) = \varphi(z_0) \iff z \in \varphi^{-1}(\varphi(z_0)) \subset \T \backslash X$. Thus,
    \[ \abs{g(\varphi(z))} = 1 \Longrightarrow \abs{\psi(z)} < 1 
        \Longrightarrow \abs{S(g)(z)} = \abs{\psi(z)} \abs{g(\varphi(z))} < 1. \]
    Therefore, $\norm{S(g)}_{\infty, 1} < 1$, which is impossible because $S$ is isometric for $\norm{\cdot}_{\infty, 1}$. To conclude, $X = \T$ (by Theorem \ref{Thm - Jonathan amélioré}), and $\psi$ is a finite Blaschke product. \medskip

    Finally, $S(f)(z) = B_1(z) f(B_2(z))$,
    with $B_1$ and $B_2$ of the wanted form. We have only left to go back to $T$ via the invertible operator $U$, and that gives
    \begin{align*}
        T(f)(z) & = (U^{-1}S)(Uf)(z) \\ 
        & = U^{-1}(B_1(z) \times (Uf)(B_2(z))) \\
        & = U^{-1}(B_1(z) \times f(r B_2(z)) \\
        & = B_1(z/r) f(rB_2(z/r)).
    \end{align*}

    We finish the proof by noting that since the equality must hold for all $f \in \Hol(\D)$, if we denote $\phi(z) = r B_2(z/r)$, we must have $\phi(\D) \subset \D$. 
    Moreover, $\phi(r \overline \D) = r \overline \D$, so by a renormalisation and Lemma \ref{Lemme du train}, $\phi$ is a rotation. Hence, there exists $\abs{\beta} = 1$ such that
    \[ T(f)(z) = B_1(z/r) f(\beta z). \qedhere \]
\end{proof}

\section{Spectral study of the isometries}
    \label{Section - Spectre isométries}

In this section, we make a spectral study of the isometries of $\Hol(\D)$, endowed with two or a single seminorm. We will consider separately the two cases.

\subsection{Spectra of the isometries for two seminorms} \;

First, we focus on the isometries of $\Hol(\D)$, that is the operators
\[ T_{\alpha, \beta} : f \longmapsto \alpha f(\beta \cdot). \]

Following the results of \cite{ABBC-rot}, the results depend on the nature of the number $\beta$ (cf. Definition \ref{Def - Périodique et apériodique}).

\begin{itemize}[label=\sbt]
    \item If $\beta$ is periodic, then $\sigma(T_{\alpha, \beta}) = \sigma_p(T_{\alpha, \beta}) = \{ \alpha \beta^k : k \in \N_0\}$. \smallskip
    
    \item If $\beta$ is aperiodic, then 
    \[ \sigma_p(T_{\alpha, \beta}) = \{ \alpha \beta^k : k \in \N_0\}
        \subset \sigma(T_{\alpha, \beta}) \subset \T, \]
    and if we consider for $\tau > 2$ the set of all \textit{Diophantine numbers of order $\tau$}, defined by
    \[ \mathcal D (\tau) = \{ \xi \in \R : \exists \gamma > 0, 
        \forall p \in \Z, \forall q \in \N, 
        \abs{p/q - \xi} \ge \gamma q^{-\tau} \}, \]
    we obtain a little improvement: for $\beta = e^{2i\pi\xi}$, $\xi \in \mathcal D(\tau)$,
    \[ \sigma(T_{\alpha, \beta}) \subset \{e^{2i\pi x} : x \not\in \Q \} \cup \{1\}. \]
\end{itemize}

\subsection{Spectra of the isometries for one seminorm} \;

Now, let $0 < r < 1$. Using the results of Section \ref{Section - Isométries 1 semi-norme}, we want to find the spectra of the operators defined by
\[ T(f)(z) = B(z/r) f(\beta z), \]
with $B$ constant or a finite Blaschke product, and $\beta \in \T$. \smallskip

\uline{1\textsuperscript{st} case}: If $B$ is a constant map of modulus one, denote by $\xi \in \T$ the constant. Then we obtain $T = T_{\xi, \beta}$, so the spectral study has already been done previously. \medskip

\uline{2\textsuperscript{nd} case}: If $B$ is a finite Blaschke product, then since $B$ is vanishing on $\D$, we obtain $\sigma_p(T) = \varnothing$ (see \cite[Proposition 3.3 and 3.6]{ABBC-rot}). 
Now, to compute the spectrum of $T$, consider once again the two subcases depending on the nature of $\beta$.

\begin{itemize}[label=\sbt]
    \item If $\beta$ is periodic, then there exists $N \in \N$, $N \ge 2$ such that $\beta^N = 1$ and $\beta^k \neq 1$ for $k < N$.
    It follows that for all $f \in \Hol(\D)$,
    \[ T^N (f)(z) = m_N(z) f(z), \qquad 
        m_N(z) = \prod_{j = 0}^{N-1} B\left(\frac{\beta^j z}{r}\right). \]
    Hence, by \cite[Proposition 3.2]{ABBC-rot}, we have $\sigma(T) = \{\lambda \in \C : \lambda^N \in m_N(\D)\}$. \smallskip

    \item If $\beta$ is aperiodic, then by \cite[Corollary 2.2]{ABBC-rot}, $\{\beta^n B(0) : n \in \N_0\} \subset \sigma(T)$. Moreover, using \cite[Proposition 7.2]{ABBC-rot},
    we have $\sigma(T) \subset D(0, M)$, with $M$ defined by
    \[ M = \lim_{R \to 1} M_R, \qquad M_R = \exp\left( \frac{1}{2\pi} \int_0^{2\pi} \log \abs{B(Re^{it}/r)} \dd t \right). \]
    Note that if $m(z) = B(z/r)$, we can write
    \[ m(z) = z^N \tilde m (z), \quad \text{with} \quad
        \tilde m(z) = \frac{e^{i\theta}}{r^N} \prod_{j = 1}^K 
            \frac{z - r\alpha_j}{r - \overline \alpha_j z}, \]
    with $\alpha_1, \cdots, \alpha_K \neq 0$. Since $m$ is well defined in $\overline \D$, the equation (7.2) of \cite{ABBC-rot} is also valid for $M_R$ with $R \to 1$, that is for $M_1 = M$. Hence, 
    \[ M = \abs{\tilde m (0)} \prod_{j = 1}^K \frac{1}{\abs{r \alpha_j}}
        = \abs{\frac{e^{i\theta}}{r^N} \prod_{j = 1}^K \frac{-r\alpha_j}{r}}
            \prod_{j = 1}^K \frac{1}{\abs{r \alpha_j}}
        = \frac{1}{r^N} \prod_{j = 1}^K 
            \frac{\abs{\alpha_j}}{r \abs{\alpha_j}}
        = \frac{1}{r^{N+K}}. \]
    Therefore, if $B$ has $d$ zeroes (counting the multiplicities), then 
    \[ \{\beta^n B(0) : n \in \N_0\} \subset 
            \sigma(T) \subset D(0, r^{-d}). \]
    Thus, the spectrum of $T$ contains a set which is dense in the circle centered at $0$ and of radius $\abs{B(0)}$.
\end{itemize}

\bigskip

\noindent \textbf{Acknowledgments:}
This research is partly supported by the Bézout Labex, funded by ANR, reference ANR-10-LABX-58.
The authors are grateful to an anonymous referee for comments.  The first two authors also wish to thank E. Abakoumov for stimulating questions and suggestions,
while the third author thanks
Michael Pilla for drawing our attention to the results of \cite{Kamowitz}.

\end{document}